\titleformat{\section}{\bfseries}{\thesection.}{1em}{}
\numberwithin{equation}{section}
\newtheorem{theorem}[equation]{Theorem}
\newtheorem*{thmnonum}{Theorem}
\newtheorem*{mainthm}{Main Theorem}
\newtheorem{lemma}[equation]{Lemma}
\newtheorem{cor}[equation]{Corollary}
\theoremstyle{definition}
\newtheorem{definition}[equation]{Definition}
\newcommand{\abs}[1]{\left\lvert#1\right\rvert}
\DeclareMathOperator{\Har}{\operatorname{Har}}
\DeclareMathOperator{\Lip}{\operatorname{Lip}}
\begin{document}

\title{Transversally Lipschitz harmonic functions are Lipschitz}
\author{Sivaguru Ravisankar}
\address{Department of Mathematics, The Ohio State University, Columbus, Ohio 43210}
\email{\href{mailto:sivaguru@math.ohio-state.edu}{\nolinkurl{sivaguru@math.ohio-state.edu}}}
\subjclass[2000]{35B65}
\date{\usdate\today}

\keywords{Harmonic functions, Transversally Lipschitz, Majorant}

\begin{abstract}
Let $\Omega\subset\mathbb{R}^n$ be a bounded domain with $C^\infty$ boundary. We show that a harmonic function in $\Omega$ that is Lipschitz along a family of curves transversal to $b\Omega$ 
is Lipschitz in $\Omega$.  The space of Lipschitz functions we consider is defined using the notion of a majorant which is a certain generalization of the power functions $t^\alpha$, $0<\alpha<1$.	
\end{abstract}

\maketitle

\section{Introduction}
The purpose of this paper is to show that transverse Lipschitz regularity transfers to all directions for a harmonic function on a bounded domain with $C^\infty$ boundary. The space of Lipschitz functions we 
consider is defined using the notion of a majorant (see Definition  \ref{defn:Majorant}). A majorant is a certain generalization of the power functions $t^\alpha$, $0<\alpha<1$. This generalization allows us to 
highlight the key properties of the power function $t^\alpha$ that enter the analysis here.

Let $\Omega\subset\mathbb{R}^n$ be a bounded domain with $C^\infty$ boundary.
\begin{definition} Let $B$ be a majorant. A function $f$ defined in $\Omega$ is called {\it Lipschitz-$B$} if $\exists\, C_f>0$ such that
\[\abs{f(x)-f(y)} \le\, C_f\cdot B(\abs{x-y}), \ \forall\, x,y\in\Omega .\]
Let $\Lambda_B(\Omega)$ denote the set of Lipschitz-$B$ functions on $\Omega$.
\end{definition}
The classical Lipschitz (or H\"{o}lder) spaces correspond to the majorants $B(t)=t^{\alpha}$, $0<\alpha<1$. We 
call Lipschitz-$t^\alpha$ functions as Lipschitz-$\alpha$ functions and we denote $\Lambda_{t^\alpha}\left(\Omega\right)$ by $\Lip_{\alpha}\left(\Omega\right)$. Readers not interested in this generalization may replace every occurrence of the majorant (or regular majorant) $B(t)$ with the function $t^\alpha$, $0<\alpha<1$, and our result is interesting even in this special case.

We use a family of curves transversal to $b\Omega$ (see Definition \ref{defn:FamTransCurves}) to measure transverse regularity. Let $\Gamma$ be such a family. A function $f$ defined in $\Omega$ is said to be transversally 
Lipschitz-$B$ with respect to $\Gamma$ if the restriction of $f$ to each curve of $\Gamma$ in $\Omega$ is (uniformly) Lipschitz-$B$. Our main theorem is as follows. Let $\Har(\Omega)$ denote the set of 
harmonic functions defined in $\Omega$.

\begin{mainthm} Let $\Omega \subset \mathbb{R}^n$ be a bounded domain with $C^\infty$ boundary and let $\Gamma$ be a family of curves transversal to $b\Omega$. Let $u\in\Har(\Omega)$ and 
$B$ be a regular majorant. If $u$ is transversally Lipschitz-$B$ with respect to $\Gamma$, then $u\in\Lambda_B(\Omega)$.
\end{mainthm}
In particular, for $B(t)=t^\alpha, 0<\alpha <1$, we have the following corollary.
\begin{cor} Let $\Omega \subset \mathbb{R}^n$ be a bounded domain with $C^\infty$ boundary and let $\Gamma$ be a family of curves transversal to $b\Omega$. Let $u\in\Har(\Omega)$ and 
$0<\alpha<1$. If $u$ is transversally Lipschitz-$\alpha$ with respect to $\Gamma$, then $u\in\Lip_\alpha(\Omega)$.
\end{cor}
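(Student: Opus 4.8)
The Corollary is the case $B(t)=t^{\alpha}$ of the Main Theorem (it is routine that $t^{\alpha}$, $0<\alpha<1$, is a regular majorant), so I would prove the Main Theorem.

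First I would record two preliminary facts forced by the hypothesis. Since $\Gamma$ is transversal to $b\Omega$ (Definition~\ref{defn:FamTransCurves}) and $b\Omega$ is compact and $C^{\infty}$, every point of $\Omega$ near $b\Omega$ lies on a curve of $\Gamma$ that reaches $b\Omega$ after bounded length, with arc length along the curve comparable, uniformly in the curve, to the distance to $b\Omega$; uniform Lipschitz-$B$ along curves then gives both $\|u\|_{L^{\infty}(\Omega)}<\infty$ and a limit $u^{*}(\xi)$ of $u$ along each curve at its endpoint $\xi\in b\Omega$, with $|u(x)-u^{*}(\xi)|\lesssim B(d(x))$ whenever $x$ lies on the curve ending at $\xi$ (here $d(x)=\operatorname{dist}(x,b\Omega)$). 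Writing $\omega(t)=\sup\{\,|u(x)-u(y)|:x,y\in\Omega,\ |x-y|\le t\,\}$, the conclusion $u\in\Lambda_{B}(\Omega)$ is exactly $\omega(t)\le C\,B(t)$ for small $t$. Pairs $x,y$ at distance bounded below are harmless, and if both points lie at depth $\gtrsim|x-y|$ the interior gradient estimate for harmonic functions together with the monotonicity of $t\mapsto B(t)/t$ gives the bound; so the whole problem is to control the oscillation of $u$ on $B(\xi,t)\cap\Omega$ for $\xi\in b\Omega$ and $t$ small, and here the $C^{\infty}$ boundary supplies all the needed geometry of $\Omega$ and all the needed estimates for the Poisson kernel and harmonic measure of $\Omega$ near $b\Omega$.

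The heart of the matter is a self-improving inequality for $\omega$ obtained by playing the transversal hypothesis against harmonicity. Given a small scale $t$, $\xi\in b\Omega$, and $x,y\in B(\xi,t)\cap\Omega$, I would slide $x$ and $y$ inward along their curves of $\Gamma$ to points $\widehat{x},\widehat{y}$ at comparable positive depth; by transversality this costs only $|u(x)-u(\widehat{x})|+|u(y)-u(\widehat{y})|\lesssim B(t)$. For the pair $\widehat{x},\widehat{y}$ I would use the Poisson representation on a Carleson box $B(\xi,\Lambda t)\cap\Omega$ — working first on $B(\xi,\Lambda t)\cap\{d>\delta\}$, whose inner boundary is the smooth surface $\{d=\delta\}$, and letting $\delta\downarrow 0$, where the transversal hypothesis again converts interior values of $u$ near $\{d=\delta\}$ into boundary limits up to an error tending to $0$ — writing $u(\widehat{x})-u(\widehat{y})=\int\bigl(u-u^{*}(\xi)\bigr)\,\bigl(d\omega^{\widehat{x}}-d\omega^{\widehat{y}}\bigr)$ against the harmonic measures of the box and decomposing: the spherical part of the box contributes at most $\varepsilon(\Lambda)\,\omega(\Lambda t)$ with $\varepsilon(\Lambda)\to 0$, while $b\Omega$ is split into the annuli $B(\xi,2^{j}t)\setminus B(\xi,2^{j-1}t)$, on the $j$-th of which the oscillation of the boundary data is $\le\omega(2^{j}t)$ and the harmonic-measure weight decays geometrically in $j$. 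Collecting everything gives an inequality of the shape
\[
\omega(t)\ \le\ C_{1}\,B(t)\ +\ \sum_{j\ge 0}\lambda_{j}\,\omega\!\bigl(\rho_{j}t\bigr),\qquad t\ \text{small},
\]
with $\rho_{j}\ge 1$ and the total weight $\sum_{j}\lambda_{j}$ strictly less than $1$ — the essential point, coming from a Harnack/interior oscillation-decay estimate and from the freedom in how far one slides the comparison points inward. Iterating this (a Neumann-series argument in the scale variable) and using that a regular majorant is doubling, grows strictly more slowly than linearly, and satisfies the Dini-type summability $\sum_{j}2^{-j}B(2^{j}s)\lesssim B(s)$, the resulting series in $t$ converges and yields $\omega(t)\le C\,B(t)$; the base case of large $t$ is trivial since $\omega$ and $B$ are then bounded below by positive constants. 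Patching the finitely many boundary charts and combining with the interior estimate gives $u\in\Lambda_{B}(\Omega)$, and the Corollary is the case $B(t)=t^{\alpha}$.

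The step I expect to be the main obstacle is making this inequality close — arranging that the reappearing oscillations $\omega(\rho_{j}t)$ come with total weight $<1$ and that summing them over the $\sim\log(1/t)$ coarser scales loses no logarithmic factor. Both are exactly what the hypothesis ``$B$ is a \emph{regular} majorant'' is designed to guarantee: the Dini conditions kill the logarithm and the strict sublinearity of $B$ makes the geometric-in-scale series converge. Obtaining total weight below $1$ is the more delicate half, and it is where transversality is used in an essential (and quantitative) way: one must slide the comparison points far enough into $\Omega$, using only the controlled behavior of $u$ along the curves of $\Gamma$, that the two harmonic measures entering the Poisson representation are close in total variation.
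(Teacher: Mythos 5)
Your approach is a genuinely different route from the paper's, and it has a real gap in its central step.

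\textbf{Comparison of routes.} The paper proves the Main Theorem by reducing, via the Hardy--Littlewood theorem (Theorem~\ref{thm:H-L}), to the pointwise gradient bound $|\nabla u(x)|\lesssim B(\delta(x))/\delta(x)$; it then regularizes by scaling ($u_\lambda(x)=u(\lambda x)$, so $u_\lambda\in C^\infty(\overline{\mathbb B})$ and the relevant supremum is a priori finite), Taylor-expands $u_\lambda$ along the curves of a perturbed family $\Gamma_\lambda$ to control the transversal derivative $Mu_\lambda$, compares $M$ with a frozen-coefficient $M_0$ and an orthogonal $N_0$, and closes by absorbing a small multiple of $C^U_{u,\lambda}$ into itself, finally letting $\lambda\to1$ and passing from the ball to $\Omega$ by attaching spherical sectors. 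You instead work directly with the modulus of continuity $\omega(t)$, slide points inward along $\Gamma$, represent $u$ on a Carleson box by harmonic measure of $\Omega$, decompose $b\Omega$ dyadically, and aim at a self-improving inequality
\[
\omega(t)\le C_1 B(t)+\sum_{j}\lambda_j\,\omega(\rho_j t),\qquad \rho_j\ge1,\ \sum_j\lambda_j<1,
\]
which you would iterate. None of the paper's apparatus (scaling, $\Gamma_\lambda$, frozen vector fields, Lemma~\ref{lemma:HarEst}--\ref{lemma:EstOutsideCpt}, Hardy--Littlewood) appears in your sketch; if it closed, your route would be more in the spirit of classical harmonic-measure/oscillation arguments and would dispense with the sector-gluing step entirely.

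\textbf{Where the gap is.} The self-improving inequality is asserted, not proved, and you yourself flag it as the main obstacle; as written, it is precisely the point at which the argument could fail. Two specific issues. First, after sliding $x,y$ inward by arc length $\sim Mt$ along their respective curves, the separation $|\widehat x-\widehat y|$ is \emph{not} automatically $\lesssim t$: it is $\lesssim t + Mt\cdot\sup|\gamma'_{p}-\gamma'_{q}|$, so controlling the total variation $\|\omega^{\widehat x}-\omega^{\widehat y}\|$ requires, quantitatively, the uniform closeness of the tangent vectors $\gamma'_p$ over $p$ in a small boundary cap --- exactly the $\epsilon$--$\delta_0$ bookkeeping the paper performs around $\Gamma_\lambda$. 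Without that, ``depth $\gg$ separation'' does not hold and the gain in harmonic-measure closeness is not $\sim1/M$. Second, even granting the inequality, the iteration must produce a bound $\omega(t)\lesssim B(t)$ after stopping at the cutoff scale $\rho^k t\sim1$; for $B(t)=t^\alpha$ this requires the saving factor $\theta$ and the scale jump $\rho$ to satisfy $\theta\rho^\alpha<1$, a constraint that tightens as $\alpha\to1$. You claim regularity of $B$ (Dini conditions) furnishes this, but the chain of inequalities tying the geometric constant $\theta$ (coming from the harmonic-measure estimate on the spherical cap and the TV bound on $b\Omega$) to the scale jump $\rho$ and to $B$'s growth is not written down, and it is not routine. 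By contrast, the paper's absorption is a pointwise inequality in $x_0$ --- $C^{U''}_{u,\lambda}\le C_4+\frac{3}{100}C^U_{u,\lambda}$, followed by Lemma~\ref{lemma:EstOutsideCpt} --- which does not require iterating over dyadic scales at all and hence never confronts the $\theta\rho^\alpha<1$ constraint. Until you exhibit the quantitative harmonic-measure estimate and verify the iteration closes for all $0<\alpha<1$, this is a plan rather than a proof.

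Two smaller points: (i) the identification of the curve-limit $u^*$ with the non-tangential boundary trace of the bounded harmonic function $u$, and the fact that $u$ is the Poisson integral of $u^*$ on $\Omega$ (or on the truncated box), is used implicitly and should be stated; it is true, since transversal curves are non-tangential, but it is a separate fact. (ii) Reducing the Corollary to the Main Theorem requires only the observation that $t^\alpha$ for $0<\alpha<1$ is a regular majorant, which you correctly note; that part is fine.
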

We outline the proof of the Main Theorem for the special case $\Omega=\mathbb{B}$. The proof in this special case captures all the key ideas behind the result. The general case is handled by attaching 
($\mathbb{R}^n$-) sectors of balls to $b\Omega$ and then using the result for $\mathbb{B}$. 

Let $u$ be harmonic in $\mathbb{B}$ and uniformly Lipschitz-$B$ along $\Gamma$, a family of curves transversal to $b\mathbb{B}$. To show the conclusion $u\in\Lambda_B(\mathbb{B})$, it suffices by the 
Hardy-Littlewood Theorem (Theorem \ref{thm:H-L}) to show that
\[\abs{\nabla u(x)} \,\lesssim\, \frac{B(\delta(x))}{\delta(x)}, \text{ for } x\in U\cap\mathbb{B},\]
where $U$ is a neighbourhood of $b\mathbb{B}$ and $\delta(x)$ is the Euclidean distance of $x$ to $b\mathbb{B}$.

We use a scaling 
argument via $u_\lambda(x)=u(\lambda x)$, $1/2 < \lambda < 1$. We exploit the fact that $u_\lambda$ is harmonic in $\mathbb{B}$, $u_\lambda\in C^\infty\left(\overline{\mathbb{B}}\right)$, and $u_\lambda$ is 
Lipschitz-$B$ along $\Gamma_\lambda$, a suitable perturbation of $\Gamma$. Let $M$ be the unit vector field given by differentiation along curves of $\Gamma_\lambda$. We show that $Mu_\lambda$ grows
no faster than the rate prescribed by the Hardy-Littlewood theorem, modulo an error term involving a small constant times $\nabla u_\lambda$. We accomplish this by using a constant 
coefficient approximation of the vector field $M$, call it $M_0$,  and estimating the second derivatives of $u_\lambda$ by its first derivatives in its Taylor expansion along the curves of 
$\Gamma_\lambda$. We then show that, for a constant coefficient vector field $N_0$ that is orthonormal to $M_0$, the rate of growth of $N_0u_\lambda$ is similar to that of $M_0u_\lambda$. Combining these 
estimates, we show that $\nabla u_\lambda$ has a rate of growth no worse than that prescribed by the Hardy-Littlewood theorem modulo an error term involving a small constant times $\nabla u_\lambda$. We 
absorb the small constant times $\nabla u_\lambda$ into the $\nabla u_\lambda$ term to show that $\nabla u_\lambda$ grows no worse than the rate prescribed by the Hardy-Littlewood theorem. Since the 
constants in our estimates are independent of $\lambda$, we let $\lambda \to 1$ to finish the proof.

Our result generalizes a result of Pavlovi\'{c} \cite{Pav07-LipHarmonic} which states that the Lipschitz behaviour in the radial direction of a harmonic function in $\mathbb{B}$ transfers to all directions, where $\mathbb{B}$ is the unit ball in $\mathbb{R}^n$. 
\begin{thmnonum}[Pavlovi\'{c}, 2007] Let $u\in\Har(\mathbb{B}) \cap C(\overline{\mathbb{B}})$ and $B$ be a regular majorant. If $\exists\, C>0$  such that 
\[\abs{u(\zeta)-u(r\zeta)} \le C\cdot B(1-r), \text{ for } \zeta\in b\mathbb{B}\text{, } 0<r<1,\]
then $u\in\Lambda_B(\mathbb{B})$.
\end{thmnonum}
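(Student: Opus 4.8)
The plan is to reduce, via the Hardy--Littlewood theorem, to the interior gradient bound $\abs{\nabla u(x)}\lesssim B(\delta(x))/\delta(x)$ for $x$ near $b\mathbb{B}$, where $\delta(x)=1-\abs{x}$, and to prove this by a scaling and absorption argument. Since the hypothesis controls $u$ only along radii and $u$ need not be smooth up to $b\mathbb{B}$, I would first work with the dilates $u_\lambda(x)=u(\lambda x)$, $\tfrac12<\lambda<1$, which are harmonic in a neighbourhood of $\overline{\mathbb{B}}$, hence in $C^\infty(\overline{\mathbb{B}})$. From the identity $u_\lambda(\zeta)-u_\lambda(r\zeta)=\bigl[u(\zeta)-u(\lambda r\zeta)\bigr]-\bigl[u(\zeta)-u(\lambda\zeta)\bigr]$ together with the subadditivity $B(a+b)\lesssim B(a)+B(b)$ of a majorant (a consequence of $B(t)/t$ being nonincreasing), one gets $\abs{u_\lambda(\zeta)-u_\lambda(r\zeta)}\lesssim B(1-r)$ for all $\zeta\in b\mathbb{B}$ and all $r<\lambda$, with a constant independent of $\lambda$. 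Because $\nabla u_\lambda(x)\to\nabla u(x)$ as $\lambda\to1$ for each fixed $x\in\mathbb{B}$, it then suffices to prove the gradient bound for $u_\lambda$ with a constant independent of $\lambda$ and let $\lambda\to1$.

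The core step is an estimate for the radial derivative $\partial_r u_\lambda$. Fix $x=r\zeta$ with $\delta=\delta(x)$ small and a small parameter $\epsilon>0$. Applying Taylor's theorem to $t\mapsto u_\lambda(t\zeta)$ at $t=r$ and evaluating at $t=r\pm\epsilon\delta$, the first-order terms cancel; the zeroth-order contribution is a radial difference which, passing through the boundary value of $u_\lambda$ and using the radial estimate above, is $\lesssim B((1+\epsilon)\delta)+B(1-\lambda)\lesssim B(\delta)$ whenever $r<\lambda$; and the second-order remainder is controlled by a Cauchy estimate, $\abs{\partial_r^2 u_\lambda}\le\abs{\nabla^2 u_\lambda}\lesssim\delta^{-1}\sup_{B(x,\delta/2)}\abs{\nabla u_\lambda}$. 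Solving for $\partial_r u_\lambda(r\zeta)$ yields
\[ \abs{\partial_r u_\lambda(x)}\;\lesssim\;\frac{B(\delta(x))}{\epsilon\,\delta(x)}\;+\;\epsilon\sup_{B(x,\delta(x)/2)}\abs{\nabla u_\lambda}. \]

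Next I would transfer this to directions transverse to the radius. With $\zeta_0=x_0/\abs{x_0}$ fixed, the function $v:=\zeta_0\cdot\nabla u_\lambda$ is harmonic, and since $\bigl|\,x/\abs{x}-\zeta_0\,\bigr|\lesssim\delta(x_0)$ on the Whitney ball $B(x_0,\delta(x_0)/2)$ (and identically $=0$ on the radius through $\zeta_0$), the previous estimate forces $v$ to obey essentially the same bound there and along that whole radius. For a unit vector $N_0\perp\zeta_0$, the harmonic function $w:=N_0 u_\lambda$ satisfies $\zeta_0\cdot\nabla w=N_0 v$ (mixed partials commute), so a Cauchy estimate for $v$ bounds $\abs{\zeta_0\cdot\nabla w}$ by $\delta^{-1}$ times the bound for $v$. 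Integrating $w$ along the radius through $\zeta_0$ from $x_0$ inward down to the sphere $\abs{x}=\tfrac12$ — where $\abs{\nabla u_\lambda}\le C\|u\|_\infty$ by the interior derivative estimate — and summing the oscillations over the geometrically growing radial scales $s_i$ between $\delta(x_0)$ and $\tfrac12$, one obtains $\abs{w(x_0)}\lesssim\|u\|_\infty+\sum_i B(s_i)/s_i+\epsilon\cdot(\text{terms of the same form})$. Here $\sum_i B(s_i)/s_i$ is comparable to $\int_{\delta(x_0)}^{1}B(s)\,s^{-2}\,ds$, which is $\lesssim B(\delta(x_0))/\delta(x_0)$ precisely because $B$ is a \emph{regular} majorant, and $\|u\|_\infty\lesssim B(\delta(x_0))/\delta(x_0)$ because $B(t)/t$ is nonincreasing. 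Running this with an orthonormal basis of $\zeta_0^{\perp}$ and combining with the radial estimate gives, for $x$ near $b\mathbb{B}$,
\[ \abs{\nabla u_\lambda(x)}\;\lesssim\;\frac{B(\delta(x))}{\delta(x)}\;+\;\epsilon'\,\bigl(\text{a small multiple of }\abs{\nabla u_\lambda}\text{ at the relevant radial scales}\bigr). \]

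To finish, put $\Psi(t)=\sup_{\abs{x}\le1-t}\abs{\nabla u_\lambda(x)}$, which is finite on $[0,1]$ since $u_\lambda\in C^\infty(\overline{\mathbb{B}})$ and nonincreasing in $t$. The last display is a functional inequality of the form $\Psi(t)\lesssim B(t)/t+\epsilon'\sum_i\Psi(s_i)$; choosing $\epsilon'$ small and iterating — using that $B$ is increasing and, once more, the regular-majorant integral condition to sum the resulting series, while the tail $\epsilon'^{\,k}\Psi(2^{-k}t)\le\epsilon'^{\,k}\sup_{\overline{\mathbb{B}}}\abs{\nabla u_\lambda}\to0$ — one absorbs the error term and obtains $\Psi(t)\lesssim B(t)/t$ with a constant independent of $\lambda$. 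Hence $\abs{\nabla u_\lambda(x)}\lesssim B(\delta(x))/\delta(x)$ uniformly in $\lambda$; letting $\lambda\to1$ and invoking the Hardy--Littlewood theorem completes the proof. I expect the main obstacle to be the transverse-derivative step: the hypothesis is silent about derivatives off the radii, so that estimate must be extracted from harmonicity alone, and the sole reason the resulting sum of contributions from all radial scales remains $\lesssim B(\delta)/\delta$ is the integral growth condition built into the notion of a regular majorant; a secondary nuisance is keeping the $\lambda$-dependent a priori bound $\sup_{\overline{\mathbb{B}}}\abs{\nabla u_\lambda}$ from polluting the constants, which the passage $\lambda\to1$ resolves.
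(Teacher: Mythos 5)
The paper does not actually prove Pavlovi\'{c}'s theorem; it cites it and points out that Pavlovi\'{c}'s own proof hinges on the fact that $r\,\partial u/\partial r$ and $r^2\,\partial^2 u/\partial r^2$ remain harmonic when $u$ is --- an algebraic identity special to the radial vector field that immediately encodes the radial Lipschitz behaviour. Your proposal does not use this identity at all; it instead specializes the paper's new proof of the Main Theorem (Theorem \ref{thm:TransLipBall}) to the radial family. Structurally the two are the same: reduce via Hardy--Littlewood to a growth bound on $\abs{\nabla u}$, dilate to $u_\lambda$ and check that the radial Lipschitz condition carries over with a $\lambda$-independent constant, isolate the curve derivative by a Taylor expansion whose second-order remainder is controlled by a Poisson/Cauchy estimate, transfer to the orthogonal directions by integrating a commuting constant-coefficient derivative along the transversal direction and invoking the regular-majorant integral \eqref{eq:RegMaj} to sum the scales, then absorb the small error. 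Working directly with $\partial_r$ rather than the perturbed family $\Gamma_\lambda$ is a genuine simplification that is available only in the radial case, so your proof is cleaner than the paper's for this corollary, which is to be expected.

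Two places are shakier than the paper's version and worth flagging. First, your phrase ``the first-order terms cancel'' in the Taylor step cannot be right as written: if you form $u_\lambda((r+\epsilon\delta)\zeta)-u_\lambda((r-\epsilon\delta)\zeta)$ it is the zeroth- and \emph{second}-order terms that cancel (which is in fact helpful: you can then bound the remainder by $\partial_r^3 u_\lambda$ instead of $\partial_r^2 u_\lambda$), whereas if you mean the one-sided expansion used in the paper there is nothing that cancels --- you simply solve for $\partial_r u_\lambda$ and pay the second derivative as a remainder, which is what your inequality displays. Second, the absorption step: the paper's key device is to work with the \emph{weighted} supremum $C^{U}_{u,\lambda}=\sup_{U\cap\mathbb{B}}\abs{\nabla u_\lambda}\,\delta/B(\delta)$, which is a priori finite because $u_\lambda\in C^\infty(\overline{\mathbb{B}})$, and to pass it through Lemma~\ref{lemma:EstOutsideCpt} so that a single inequality of the form $C^{U}_{u,\lambda}\le C_4+\tfrac{3}{10}C^{U}_{u,\lambda}$ closes the argument in one step. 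Your iteration over dyadic scales with the unweighted $\Psi(t)=\sup_{\abs{x}\le 1-t}\abs{\nabla u_\lambda}$ will, if not done carefully, incur a $\log(1/\delta)$ factor from the number of scales in $\sum_i\Psi(s_i)$; it can be repaired (precisely by re-weighting with $\delta/B(\delta)$ so that the regular-majorant condition absorbs the scale sum, which is exactly what the paper does), but as written this step is the weakest link. Modulo those two points the strategy is sound and matches the paper's approach to the general theorem, not Pavlovi\'{c}'s original argument.
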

His proof hinges on $r\frac{\partial u}{\partial r}$ and $r^2\frac{\partial^2 u}{\partial r^2}$ being harmonic in $\mathbb{B}$ for $u\in\Har(\mathbb{B})$. Also, the rate of growth of these radial derivatives encode the 
radial Lipschitz behaviour of $u$. In contrast, the estimates used in proving our result are significantly more involved since we do not have a differential operator that both preserves harmonic functions and also 
encodes their transverse Lipschitz behaviour along a family of curves transversal to the boundary.

The following result of D\'{e}traz \cite{Det81} is in the same spirit as ours in the setting of weighted $L^p$ regularity. 
\begin{thmnonum}[D\'{e}traz, 1981] Let $u\in\Har(\Omega)$ and $L$ be a continuous unit vector field in a neighbourhood of $b\Omega$ and transverse to $b\Omega$. 
Then, 
\[Lu\in L^p_a(\Omega) \implies \nabla u \in L^p_a(\Omega),\] 
for $p>0$ and $a>-1$. Here, 
\[L^p_a(\Omega) = \left\{ f \text{ measurable on } \Omega\, :\, \int\limits_\Omega\, \abs{f(x)}^p\,\delta(x)^a\, dx < \infty\right\}\]
where $\delta(x)$ is the Euclidean distance of $x$ to $b\Omega$.
\end{thmnonum}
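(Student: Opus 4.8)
The plan is to localize near $b\Omega$, pass to a model half-ball, and there prove that \emph{a tangential derivative of a harmonic function has the same weighted $L^p$ size as its normal derivative}; the fact that $L$ is only continuous is accommodated by freezing $L$ at the centre of each coordinate patch and absorbing the resulting small error. First, since $u$ is harmonic in $\Omega$, the part of $\int_\Omega\abs{\nabla u}^p\delta^a$ — and of the hypothesis $\int_\Omega\abs{Lu}^p\delta^a$ — over any set $\{\delta\ge\eta\}$ is automatically finite, so only $\int_{U\cap\Omega}\abs{\nabla u}^p\delta^a$ over a thin collar $U$ of $b\Omega$ needs to be bounded, and interior contributions may be discarded freely. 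Covering $b\Omega$ by finitely many $C^\infty$ charts and using a partition of unity, one reduces — comparing $\Delta$ with its constant-coefficient freezing at each chart centre, or, in the spirit of the body of the paper, by gluing Euclidean ball-sectors onto $b\Omega$ so as to stay in the genuinely harmonic setting — to the model situation of a half-ball $B^+=\{\abs{x}<1,\ x_n>0\}$, with $\delta(x)\approx x_n$ and with $L$ replaced by a constant transverse field, which after a rotation is $e_n$. Freezing $L$ costs an error $\abs{Lu-\partial_n u}=\abs{(L-L_0)u}\le\omega(\rho)\abs{\nabla u}$, where $\omega$ is the modulus of continuity of $L$ and $\rho$ the patch size, and this is $\le\varepsilon\abs{\nabla u}$ once $\rho$ is small.

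The model estimate to prove is: for $v$ harmonic in $B^+$, $0<p<\infty$, $a>-1$ and $1\le j\le n$,
\[\int_{B^+\cap\{\abs{x}<1/2\}}\abs{\partial_j v}^p\,x_n^a\,dx\ \lesssim\ \int_{B^+}\abs{\partial_n v}^p\,x_n^a\,dx\ +\ \int_K\abs{v}^p\,dx ,\qquad K\Subset B^+ ,\]
the last term being a harmless interior quantity. For $j=n$ this is immediate from subharmonicity of $\abs{\partial_n v}$. For $j<n$ the key point is that $\partial_n\partial_j v=\partial_j(\partial_n v)$ is a \emph{first-order derivative of the harmonic function} $w:=\partial_n v$, so the interior gradient estimate gives $\abs{\partial_j\partial_n v(y)}\lesssim y_n^{-n}\int_{B(y,\,y_n/2)}\abs{\partial_n v}$ for $y$ near the flat face; integrating in $x_n$ from the fixed height $1/4$ down to height $t$,
\[\abs{\partial_j v(x',t)}\ \le\ \abs{\partial_j v(x',\tfrac14)}\ +\ C\int_t^{1/4}s^{-n}\!\!\int_{B((x',s),\,s/2)}\!\!\abs{\partial_n v(y)}\,dy\,ds ,\]
the first term being controlled by the interior term above. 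For $p\ge1$ the double integral defines a positive operator from $L^p(x_n^a\,dx)$ to $L^p(t^a\,dx'\,dt)$ whose boundedness follows from a weighted Schur test, the relevant exponents being admissible exactly because $a>-1$ and the domain is bounded.

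For $0<p<1$ this averaging step fails — the inner average of $\abs{\partial_n v}$ is essentially a maximal function, unbounded on $L^p$ — and here one invokes $H^p$ theory: for every $p>0$, $\int_{B^+}\abs{g}^p x_n^a\,dx$ is comparable to an $L^p$ norm of a weighted non-tangential maximal function (or area integral) of the harmonic function $g$, and since $\nabla v$ is a conjugate system of harmonic functions, the maximal functions of $\partial_j v$ and $\partial_n v$ are $L^p$-comparable for all $p>0$ (Stein--Weiss, Fefferman--Stein); alternatively one runs the argument through the subharmonicity of $\abs{\nabla v}^{(n-2)/(n-1)}$. Granting the model estimate for all $p>0$, summing it over the chart cover and inserting $\partial_n v=L_0u=Lu-(L-L_0)u$, so that $\abs{\partial_n v}\le\abs{Lu}+\varepsilon\abs{\nabla u}$, bounds $\int_{U\cap\Omega}\abs{\nabla u}^p\delta^a$ by $C\int_\Omega\abs{Lu}^p\delta^a$, plus a coefficient tending to $0$ with $\varepsilon$ times $\int_{U'\cap\Omega}\abs{\nabla u}^p\delta^a$ on a slightly larger collar $U'$, plus a finite interior term; iterating this over a nested family of collars (a bounded-overlap argument) absorbs the middle term, and $\nabla u\in L^p_a(\Omega)$ follows. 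The customary dilation device $u\mapsto u(\lambda\,\cdot\,)$, with constants uniform in $\lambda$, makes the formal manipulations rigorous.

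The genuinely hard step is the range $0<p<1$: there the elementary Schur/averaging estimate collapses and one is forced into the machinery of $H^p$ spaces and conjugate harmonic systems. A secondary nuisance is the last reduction, where the error term sits on a collar strictly larger than the one being estimated, so the absorption must be arranged as an iteration.
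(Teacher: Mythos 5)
The paper does not contain a proof of D\'{e}traz's theorem: it is quoted verbatim in the introduction as a related prior result, with the attribution \cite{Det81}, and the body of the paper proves only the author's Main Theorem (the Lipschitz analogue) and its supporting lemmas. So there is no ``paper's own proof'' to compare against; what follows is an assessment of your sketch on its own terms.

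Your overall plan --- localize to a collar, freeze $L$ to a constant transverse field at a cost of $\varepsilon\abs{\nabla u}$, reduce to a half-ball model where the transverse direction is $e_n$, and control the tangential derivative of a harmonic $v$ by integrating $\partial_n\partial_j v = \partial_j(\partial_n v)$ from a fixed interior height down and applying the interior gradient estimate to the harmonic function $\partial_n v$ --- is a sensible skeleton, and it does share the localize-and-absorb philosophy the paper uses for its Lipschitz result. But there are two places where the sketch, as written, does not yet amount to a proof.

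First, the case $0<p<1$. You hand this to ``$H^p$ theory and conjugate systems (Stein--Weiss, Fefferman--Stein).'' But the Stein--Weiss subharmonicity of $\abs{\nabla v}^q$ and the comparability of non-tangential maximal functions of components of a first-order conjugate system hold only for $q\ge(n-1)/n$, i.e.\ for $p>(n-1)/n$. D\'{e}traz's theorem is asserted for \emph{all} $p>0$, so for $p\le(n-1)/n$ you would need higher-order Riesz (conjugate) systems, and those manipulations do not commute naively with the ``only one distinguished derivative $\partial_n v$ is controlled'' hypothesis. On top of that, the relevant $H^p$ theory would have to be the $x_n^a\,dx$-weighted version ($a>-1$), which is not a cost-free citation. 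As it stands, the step you yourself call the ``genuinely hard step'' is not an argument but a pointer.

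Second, the absorption. After freezing $L$ you have schematically
\[\int_{U}\abs{\nabla u}^p\delta^a \;\lesssim\; \int_\Omega\abs{Lu}^p\delta^a \;+\; \varepsilon^p\int_{U'}\abs{\nabla u}^p\delta^a \;+\; (\text{interior}),\]
with $U\subsetneq U'$, and you propose to ``iterate over nested collars.'' This needs a quantitative lemma relating $\int_{U'}\abs{\nabla u}^p\delta^a$ to $\int_U\abs{\nabla u}^p\delta^a$ plus an interior term --- the $L^p$ analogue of the paper's Lemma \ref{lemma:EstOutsideCpt}, which does exactly this absorption for the sup-norm version and is one of the paper's explicit ``key tools.'' Without such a lemma the iteration does not close. (A minor point: the Cauchy estimate gives $\abs{\partial_j(\partial_n v)(x',s)}\lesssim s^{-(n+1)}\int_{B((x',s),s/2)}\abs{\partial_n v}$, with exponent $n+1$ rather than your $n$; this is harmless on a bounded domain but is exactly what makes the $L^1$-Schur computation land on $y_n^{a}$ instead of $y_n^{a+1}$.)
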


On a related front, Dyakonov \cite{Dya97} showed that the Lipschitz-$B$ norm of $f$ and $\abs{f}$ are equivalent for a holomorphic function $f\in\Lambda_B(\overline{\mathbf{D}})$, where $\mathbf{D}$ is the unit 
disk in the complex plane. Pavlovi\'{c} \cite{Pav99} has given a simpler and more elegant proof of this. Pavlovi\'{c} \cite{Pav07-LipHarmonic} has also considered the equivalence between several Lipschitz-$B$ and radial Lipschitz-$B$ norms of $f$ and $\abs{f}$, on $\mathbb{B}$ and $b\mathbb{B}$, where $f$ is a real valued harmonic function in $\mathbb{B}$.

This paper is organized as follows. In Section 2, we recall the definition of a majorant and its important properties, and state the Hardy-Littlewood theorem. We present the key tools involved in the proof of the 
Main Theorem in Lemmas \ref{lemma:HarEst}, \ref{lemma:TransDistEst}, and \ref{lemma:EstOutsideCpt} in Section 3. Section 4 is devoted to the proof of the Main Theorem, first for the special case 
$\Omega=\mathbb{B}$ in Theorem \ref{thm:TransLipBall}, and then for the general case.

We also fix the following notation. $A\subset\subset B$ will mean that $A\subset B$ and has compact closure in $B$. Also, we use $a\,\lesssim\,b$ or $b\,\gtrsim\,a$ to mean $a \le 
Cb$ for some constant $C>0$ which is independent of certain parameters. It will be mentioned, or clear from the context, what these parameters are. We use $a \approx b$ to mean $a\,\lesssim\,b$ and $b\,
\lesssim\,a$. We call a function or the boundary of a domain smooth if it is $C^\infty$ smooth.

\section{Lipschitz Functions: Majorants and Hardy-Littlewood Theorem}

Majorants and their regularity appear in the work of Dyakonov \cite{Dya97} and go back at least to the work of Havin \cite{Hav71} and Zygmund \cite{Zyg59}, if not any earlier.
\begin{definition} \label{defn:Majorant} A continuous function $B : [0,\infty) \rightarrow [0,\infty)$ is called a \textit{majorant} if
\[B(0)=0,\  B \text{ is non-decreasing, and }\, \frac{B(t)}{t}\, \text{ is non-increasing}.\]
\end{definition}

Clearly, for $0<\alpha\le 1$, $t^\alpha$ is a majorant. The functions $-t^{\alpha}\ln t$, for $0<\alpha \le 1$, and $1/\left(\ln t\right)^2$ (for $t$ near $0$) are majorants. For a majorant $B$, the condition on a function being Lipschitz-$B$ is a local one. So, we only focus on the behaviour of $B$ near 0. This suggests that the more the 
majorant $B$ behaves like $t^{\alpha}$ near 0 the more we can expect Lipschitz-$B$ functions to behave like Lipschitz-$\alpha$ functions. The following integral estimate on $B$ ensures that.

\begin{definition}\label{defn:RegMaj}A majorant funtion $B$ is called \textit{regular} if $\exists\, C>0,\ \forall\, \delta >0$ sufficiently small,
\begin{equation}\label{eq:RegMaj}
\int\limits_0^{\delta}\, \frac{B(t)}{t}\, dt + \delta\,\int\limits_{\delta}^{\infty}\, \frac{B(t)}{t^2}\, dt\, \le\, C\cdot B(\delta).
\end{equation}
\end{definition}

The majorants $t^\alpha$ and $-t^{\alpha}\ln t$ are regular for $0<\alpha <1$, whereas the majorants $t$, $-t\ln t$, and $1/(\ln t)^2$ are not regular. The inequality \eqref{eq:RegMaj} can be naturally broken up into 
two inequalities. A majorant satisfying each of these inequalities can be characterized by related functions being almost increasing or almost decreasing. For more on this see \cite[Proposition 1]{Pav07-LipHarmonic} or \cite[Section 2A]{Rav11}.

We now recall a theorem of Hardy and Littlewood which gives a sufficient condition for a function to be Lipschitz-$B$, where $B$ is a regular majorant, in terms of the rate of growth of its derivative. We sketch a 
proof for the readers convenience.

\begin{theorem}[Hardy-Littlewood]\label{thm:H-L}Let $\Omega \subset\subset \mathbb{R}^n$ have smooth boundary and let $B$ be a regular majorant. Let $U$ be a neighbourhood of $b\Omega$. If $f \in C^{1}\left(\Omega\right) \cap L^{\infty}\left(\Omega\right)$ satisfies
\[\abs{\nabla f(x)} \, \lesssim\, \frac{B\left(\delta(x)\right)}{\delta(x)}\ , \quad x \in U\cap\Omega ,\]
where $\delta(x)$ is the Euclidean distance of $x$ to $b\Omega$, then $f\in\Lambda_B\left(\Omega\right)$.
\end{theorem}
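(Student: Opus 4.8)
The plan is to prove the Hardy--Littlewood theorem by integrating the gradient bound along paths and using the regularity of $B$ to control the resulting integrals. First I would reduce to a local statement: since $f\in L^\infty(\Omega)$, for points $x,y$ with $\abs{x-y}$ bounded below (say $\abs{x-y}\ge\delta_0$ for a fixed small $\delta_0$), the estimate $\abs{f(x)-f(y)}\le 2\|f\|_\infty$ is trivially $\lesssim B(\abs{x-y})$ because $B$ is non-decreasing and $B(\delta_0)>0$. So it suffices to treat $x,y$ close together; and by choosing $\delta_0$ small we may assume both $x,y$ lie in the collar $U\cap\Omega$ where the gradient bound is available, and moreover (shrinking $U$, using smoothness of $b\Omega$) that the segment or a short rectifiable path from $x$ to $y$ inside $\Omega$ has length comparable to $\abs{x-y}$ and stays in $U\cap\Omega$.

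Next I would set up the key one-variable estimate. For a point $x$ at distance $d=\delta(x)$ from the boundary, moving ``inward'' along the inner normal to a point $x'$ at distance, say, $2d$ (or comparable), one integrates $\abs{\nabla f}$ along the segment: the bound gives $\abs{f(x)-f(x')}\lesssim\int_0^{cd}\frac{B(t)}{t}\,dt\lesssim B(d)$ by the first term in the regularity inequality \eqref{eq:RegMaj} (after a change of variables so the lower endpoint sits at a comparable distance). The standard trick is then: given two nearby points $x,y$, let $d=\abs{x-y}$, push both $x$ and $y$ inward (along normals, or toward a common point $z$ with $\delta(z)\approx d$) a distance $\approx d$ to points $x^*,y^*$ with $\delta(x^*),\delta(y^*)\approx d$; the two ``inward'' legs cost $\lesssim B(d)$ each by the computation just described, and the remaining leg from $x^*$ to $y^*$ can be taken to have length $\lesssim d$ and to stay at distance $\gtrsim d$ from $b\Omega$, so along it $\abs{\nabla f}\lesssim \frac{B(d)}{d}$ (using that $B(t)/t$ is non-increasing so $B(\delta(w))/\delta(w)\lesssim B(d)/d$ there), contributing $\lesssim B(d)$. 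Summing the three legs gives $\abs{f(x)-f(y)}\lesssim B(d)=B(\abs{x-y})$, which is exactly $f\in\Lambda_B(\Omega)$.

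The main obstacle is the geometric/path-construction step: one must produce, for every pair of nearby points in the collar, a rectifiable connecting path inside $\Omega$ whose total length is $\lesssim\abs{x-y}$ and along which the distance-to-boundary function $\delta$ stays comparable to $\abs{x-y}$ except on two short ``inward'' pieces near the endpoints (where $\delta$ ranges from its actual value up to $\approx\abs{x-y}$). This is where the $C^\infty$ (in fact $C^2$ would suffice) regularity of $b\Omega$ enters: near a smooth boundary there is a tubular neighbourhood in which the nearest-point projection and the normal coordinate are well-behaved, so one can write down such paths explicitly and estimate their lengths. I would handle this by working in boundary normal coordinates $(p,t)$ with $p\in b\Omega$ and $t=\delta$, connecting $x=(p_1,t_1)$ and $y=(p_2,t_2)$ via $x\to(p_1,\tau)\to(p_2,\tau)\to y$ with $\tau\approx\abs{x-y}$, and checking that $|p_1-p_2|\lesssim\abs{x-y}$ and that the coordinate map is bi-Lipschitz on the relevant region.

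Finally, I would note two small points to be dispatched: the change-of-variables in the first integral of \eqref{eq:RegMaj} (to accommodate that the ``inward'' leg starts at distance $t_1>0$, not at $0$) only makes the integral smaller, so it is still $\lesssim B(\delta_{\max})$ with $\delta_{\max}\approx\abs{x-y}$; and along the long middle leg the estimate $\int \frac{B(\delta(w))}{\delta(w)}\,d\ell(w)\lesssim \frac{B(d)}{d}\cdot d = B(d)$ uses monotonicity of $B(t)/t$ rather than \eqref{eq:RegMaj}. Thus only the first half of the regularity inequality \eqref{eq:RegMaj} is actually needed for this direction (the second half, the tail integral $\delta\int_\delta^\infty B(t)/t^2\,dt$, is the ingredient relevant for the converse/partial-converse statements and for the applications later in the paper), and I would remark on this. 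Assembling the three contributions and combining with the trivial far-apart case completes the proof.
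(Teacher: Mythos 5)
Your proposal is correct and is essentially the paper's own ``box argument'': the paper pushes $T,S$ inward by $\epsilon=\abs{T-S}$ along the normal coordinate $r$, bounds the two normal legs by $\int_0^{\epsilon}B(x)/x\,dx\lesssim B(\epsilon)$, and bounds the transverse leg using that $B(t)/t$ is non-increasing, exactly the three-leg decomposition you describe. Your added remark that only the first half of the regularity inequality \eqref{eq:RegMaj} is used in this direction is accurate (the tail integral $\delta\int_\delta^\infty B(t)/t^2\,dt$ appears instead in the proof of Theorem \ref{thm:TransLipBall}), and is a correct observation the paper does not make explicit.
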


\begin{proof}Notice that it suffices to show that $f$ is Lipschitz-$B$ near $b\Omega$. Fix $0 < \delta_0 < 1$ so that $V := \left\{ x\in\Omega \, :\, \delta(x) < 3\delta_0\right\}\subset U\cap\Omega$. Let $T,S\in V$ 
such that $\abs{T-S}<\delta_0$. The estimate on $\nabla f$ is in terms of the distance to the boundary. To show that $f$ is in $\Lambda_{B}$ we need to compare the function values at $T$ and $S$ in $V$. We 
achieve this by pushing these points inside $\Omega$ by a fixed $\epsilon$ so that we can use the estimate on $\nabla f$. We then choose $\epsilon$ effectively to achieve the result.
\begin{figure}[htp]
\centering
\includegraphics[width=0.5\textwidth]{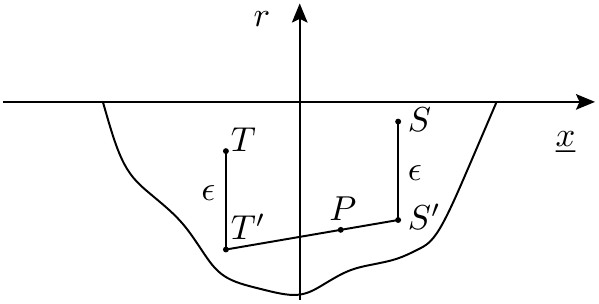}
\caption{Box Argument - Hardy-Littlewood}
\end{figure}

Let $r$ be the signed distance to $b\Omega$, i.e.,
\[r(x) = \begin{cases}
-\delta(x), & \text{if } x\in\Omega, \text{ and}\\
\delta(x), & \text{if } x\notin\Omega.
\end{cases}\]
$r$ is smooth near $b\Omega$ and a defining function for $\Omega$, i.e., $\Omega = \{r<0\}$, $b\Omega = \{ r=0 \}$, and $\abs{\nabla r}\ne 0$ on $b\Omega$\footnote{For more on the distance to the boundary 
function, see Gilbarg-Trudinger \cite[pp.~354-357]{Gil-Tru83} and Herbig-McNeal \cite{Her-McN10}.}. Decrease $\delta_0$, if necessary, so that $r$ is smooth in $V$ and we may assume, without loss of any 
generality, that $\partial r/\partial x_n \neq 0$ near $T$ and $S$. So, we consider $r$ to be a coordinate in the normal direction on $V$, i.e., $(x_1,\ldots,x_{n-1},r)$ are coordinates on $V$. Let $T=(t_1,
\ldots,t_{n-1},t_n),\, S=(s_1,\ldots, s_{n-1}, s_n) \in V$. For $0<\epsilon \le \delta_0$, let $T'=(t_1,\ldots,t_{n-1},t_n-\epsilon)$ and  $S'=(s_1,\ldots,s_{n-1},{s_n-\epsilon})$. Since $r$ is a coordinate in the normal 
direction, we know that $T', S' \in \Omega$. Also, for any $P$ in the line $L'$, in the $(x_1,\ldots,x_{n-1},r)$ coordinate system, joining $T'$ and $S'$,  $\delta(P) > \epsilon$.
\begin{align*}
\abs{f(T')-f(S')} &\le \abs{\nabla f(P)}\abs{T'-S'}  \, \lesssim\,  \frac{B\left(\delta(P)\right)}{\delta(P)}\cdot \abs{T-S}\quad (\text{for some }P \in L')\\
&\, \lesssim\, \frac{B(\epsilon)}{\epsilon} \cdot \abs{T-S}\quad (\text{since } B(x)/x \text{ is non-increasing}).
\end{align*}
Choosing $\epsilon=\abs{T-S}$, we get $\abs{f(T')-f(S')} \,\lesssim\, B(\abs{T-S})$. We now estimate $\abs{f(T)-f(T')}$.
\begin{align*}
\abs{f(T)-f(T')} &= \abs{\ \int\limits_0^{\epsilon} \frac{\partial f}{\partial r}(t_1,\ldots,t_{n-1},t_n-x)\, dx\ } \le \int\limits_0^{\epsilon} \frac{B(-t_n+x)}{-t_n+x}\, dx \\
&\le \int\limits_0^{\epsilon} \frac{B(x)}{x}\, dx\,\, \lesssim\, B(\abs{T-S}) \quad\text{(since $B$ is regular)}.
\end{align*}
Similary, one estimates $\abs{f(S)-f(S')}$.
\end{proof}
For harmonic functions, the converse of the above theorem is also true. Hence, the condition on the rate of growth of a harmonic function's derivative characterizes the function being Lipschitz.
\begin{lemma} Let $\Omega\subset\subset\mathbb{R}^n$ have smooth boundary and let $u\in\Har(\Omega)$. If $u\in\Lambda_B(\Omega)$, then
\[\abs{\nabla u(x)}\, \lesssim\, \frac{B(\delta(x))}{\delta(x)},\ \forall x\in\Omega .\]
\end{lemma}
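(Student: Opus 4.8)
The plan is to derive this from the classical interior gradient estimate for harmonic functions, together with the hypothesis $u\in\Lambda_B(\Omega)$; notably, regularity of $B$ is not needed here, only that $B$ is non-decreasing. Recall that if $v$ is harmonic on a ball $B(x_0,\rho)$, then differentiating the mean value property (equivalently, integrating $\nabla v$ over $B(x_0,\rho)$ and applying the divergence theorem, since each $\partial v/\partial x_i$ is harmonic) gives
\[\abs{\nabla v(x_0)} \,\le\, \frac{n}{\rho}\,\sup_{B(x_0,\rho)}\abs{v}.\]

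First I would fix $x\in\Omega$ and set $\rho = \delta(x)/2$, so that $\overline{B(x,\rho)}\subset\Omega$. Since $\nabla u(x) = \nabla\bigl(u-u(x)\bigr)(x)$ and $u - u(x)$ is harmonic on $B(x,\rho)$, applying the estimate above to $v = u - u(x)$ yields
\[\abs{\nabla u(x)} \,\le\, \frac{n}{\rho}\,\sup_{y\in B(x,\rho)}\abs{u(y)-u(x)}.\]
By hypothesis there is $C_u>0$ with $\abs{u(y)-u(x)}\le C_u\cdot B(\abs{y-x})$ for all $y\in\Omega$, and for $y\in B(x,\rho)$ we have $\abs{y-x}<\rho$, so $B(\abs{y-x})\le B(\rho)$ because $B$ is non-decreasing. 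Hence
\[\abs{\nabla u(x)} \,\le\, \frac{n\,C_u}{\rho}\,B(\rho) \,=\, \frac{2n\,C_u\,B\bigl(\delta(x)/2\bigr)}{\delta(x)} \,\le\, \frac{2n\,C_u\,B(\delta(x))}{\delta(x)},\]
the last inequality again using that $B$ is non-decreasing. This is exactly the claimed bound, with implied constant $2nC_u$ independent of $x$.

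I do not expect any real obstacle: the argument is essentially the classical interior estimate. The only points to be careful about are that one passes the radius $\rho$ through $B$ using monotonicity of $B$ itself (not the monotonicity of $B(t)/t$, which would push the estimate in the wrong direction), and that subtracting the constant $u(x)$ is what replaces $\sup\abs{u}$ on the right-hand side by the oscillation of $u$ over the ball, which is precisely the quantity the Lipschitz-$B$ hypothesis controls.
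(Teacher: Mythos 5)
Your proof is correct and is essentially the paper's proof: both apply the interior gradient estimate for harmonic functions (the paper derives it by differentiating the Poisson kernel, you cite the equivalent mean-value/divergence-theorem version) to $u-u(x)$ on a ball of radius $\delta(x)/2$, then use the Lipschitz-$B$ hypothesis and monotonicity of $B$. You also correctly note that regularity of $B$ is not used here, only that it is non-decreasing.
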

\begin{proof} Fix $x_0\in\Omega$. Let $\epsilon=\delta(x_0)/2$. So, $B(x_0,\epsilon) \subset\subset \Omega$. Now, by the Poisson integral formula, for $x\in B(x_0,\epsilon)$
\begin{align*}
\nabla u(x) &= \frac{1}{\omega_{n-1}\epsilon} \int\limits_{\abs{\xi}=\epsilon} u\left(x_0+\xi\right) \nabla_x\left(\frac{\epsilon^2 - \abs{x-x_0}^2}{\abs{x-x_0-\xi}^n}\right)\, d\sigma(\xi)\\
&= \frac{1}{\omega_{n-1}\epsilon} \int\limits_{\abs{\xi}=\epsilon} \left(u\left(x_0+\xi\right)-u(x_0)\right) \nabla_x\left(\frac{\epsilon^2 - \abs{x-x_0}^2}{\abs{x-x_0-\xi}^n}\right)\, d\sigma(\xi) .
\end{align*}
Calculating $\nabla_x$ inside the integral, setting $x=x_0$, and estimating we get
\[\abs{\nabla u(x_0)} \le \frac{n}{\epsilon}\cdot\sup\limits_{\abs{\xi}=\epsilon} \abs{u(x_0+\xi)-u(x_0)}\, \lesssim\, \frac{B(\epsilon)}{\epsilon} \,\lesssim\, \frac{B(\delta(x_0))}{\delta(x_0)}.\]
\end{proof}

\section{Key Tools Used in the Proof of the Main Theorem}

There are three key estimates we use in the proof of the Main Theorem. We state and prove them in the following lemmas. The first one allows us to estimate the values of the derivative of a harmonic function 
on a ball by the values of the harmonic function on a larger concentric ball.

\begin{lemma}\label{lemma:HarEst} Let $u$ be a harmonic function on $\mathbb{B}$ and let $0<r<R<1$. Then,
    \[\sup\limits_{\abs{x}=r} \abs{\nabla u(x)} \le \frac{n}{R-r} \cdot \sup\limits_{\abs{x}=R} \abs{u(x)}.\]
\end{lemma}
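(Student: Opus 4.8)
The plan is to reduce the statement to the standard interior gradient estimate for harmonic functions, obtained from the Poisson integral formula on a well-chosen sub-ball. Fix $x$ with $\abs{x}=r$ and set $\rho=R-r>0$. Since $R<1$, the triangle inequality gives $\overline{B(x,\rho)}\subset\overline{B(0,R)}\subset\mathbb{B}$, and the latter inclusion is into the open ball, so $\overline{B(x,\rho)}\subset\subset\mathbb{B}$; in particular $u$ is smooth on a neighbourhood of $\overline{B(x,\rho)}$ and the Poisson representation applies on $B(x,\rho)$:
\[u(y)=\frac{1}{\omega_{n-1}\rho}\int\limits_{\abs{\xi}=\rho}u(x+\xi)\,\frac{\rho^2-\abs{y-x}^2}{\abs{y-x-\xi}^n}\,d\sigma(\xi),\qquad y\in B(x,\rho),\]
exactly as in the Poisson formula recalled in the proof of the lemma following Theorem \ref{thm:H-L}, where $\omega_{n-1}$ denotes the surface area of the unit sphere in $\mathbb{R}^n$.

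Next I would differentiate under the integral sign with respect to $y$ and evaluate at $y=x$. Writing the integrand kernel as a product, the factor $\rho^2-\abs{y-x}^2$ has gradient $-2(y-x)$, which vanishes at $y=x$, while $\nabla_y\abs{y-x-\xi}^{-n}$ evaluated at $y=x$ equals $n\rho^{-n-2}\xi$; multiplying the latter by the value $\rho^2$ of the first factor at $y=x$ yields the clean identity
\[\nabla u(x)=\frac{n}{\omega_{n-1}\rho^{n+1}}\int\limits_{\abs{\xi}=\rho}u(x+\xi)\,\xi\,d\sigma(\xi).\]
Estimating crudely, $\abs{\xi}=\rho$ on the domain of integration and the sphere $\{\abs{\xi}=\rho\}$ has surface measure $\omega_{n-1}\rho^{n-1}$, so
\[\abs{\nabla u(x)}\le\frac{n}{\omega_{n-1}\rho^{n+1}}\cdot\rho\cdot\omega_{n-1}\rho^{n-1}\cdot\sup\limits_{\abs{\xi}=\rho}\abs{u(x+\xi)}=\frac{n}{R-r}\sup\limits_{\abs{\xi}=\rho}\abs{u(x+\xi)}.\]

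Finally, for $\abs{\xi}=\rho$ one has $\abs{x+\xi}\le\abs{x}+\abs{\xi}=r+(R-r)=R$, so every point $x+\xi$ appearing above lies in $\overline{B(0,R)}$; since $\abs{u}$ is subharmonic, the maximum principle gives $\sup_{\abs{\xi}=\rho}\abs{u(x+\xi)}\le\sup_{\abs{y}\le R}\abs{u(y)}=\sup_{\abs{y}=R}\abs{u(y)}$. Combining the last two displays gives $\abs{\nabla u(x)}\le\tfrac{n}{R-r}\sup_{\abs{y}=R}\abs{u(y)}$, and taking the supremum over $\abs{x}=r$ finishes the proof. There is no serious obstacle here: the points requiring care are the inclusion $\overline{B(x,R-r)}\subset\subset\mathbb{B}$ (which uses $R<1$), the triangle-inequality bound placing each $x+\xi$ inside the sphere of radius $R$, and the bookkeeping of the constant — in particular, observing that the gradient of $\rho^2-\abs{y-x}^2$ vanishes at the centre, which is precisely what produces the factor $n$ with no extra terms.
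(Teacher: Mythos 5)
Your proof is correct and follows essentially the same route as the paper: apply the Poisson integral formula on the ball of radius $R-r$ centered at the point, differentiate the kernel and evaluate at the center to get the factor $n/(R-r)$, then use the maximum principle to pass from the sphere $\abs{x+\xi}\le R$ to $\abs{y}=R$. The only cosmetic difference is that you translate the sphere of integration to the origin, whereas the paper keeps it centered at $x_0$; the computation and constant are identical.
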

\begin{proof} Fix $x_0\in \mathbb{B}$ such that $\abs{x_0}=r$. Let $\epsilon=R-r$. By Poisson integral formula, for $x\in B(x_0;\epsilon)$, 
\[\nabla u(x) = \frac{1}{\omega_{n-1} \epsilon} \int\limits_{\abs{\xi-x_0}=\epsilon} u(\xi) \cdot \nabla_x \left(\frac{\epsilon^2 - \abs{x-x_0}^2}{\abs{\xi-x}^n} \right) d\sigma(\xi).\]
Calculating $\nabla_x$ inside the integral and setting $x=x_0$, we get
\[\left.\nabla_x \left(\frac{\epsilon^2 - \abs{x-x_0}^2}{\abs{\xi-x}^n} \right)\right|_{x=x_0} = -\frac{n}{\epsilon^{n}}\cdot (\xi-x_0)\]
and hence
\[ \abs{\nabla u(x_0)} \le \frac{1}{\omega_{n-1}\epsilon}\cdot\sup\limits_{\abs{\xi-x_0}=\epsilon}\abs{u(\xi)} \cdot \frac{n}{\epsilon^{n-1}} \cdot \omega_{n-1}\epsilon^{n-1} \le \frac{n}{\epsilon}\cdot\sup\limits_{\abs{\xi}\le R}\abs{u(\xi)}.\]
The result follows by using the maximum principle and then taking supremum over $\abs{x_0}=r$.
\end{proof}

We will use the above lemma on a ball contained in $\Omega$ whose centre is obtained by moving a point in $\Omega$ near $b\Omega$ along a direction transversal to $b\Omega$. The following 
lemma helps us estimate the radius of such a ball. Recall that $\delta(x)$ is the Euclidean distance of $x$ to $b\Omega$. For $p\in b\Omega$, let $\nu_p$ denote the outward unit normal to $b\Omega$ at 
$p$.

\begin{lemma}\label{lemma:TransDistEst}
Let $\vec{v}$ be a unit vector that is transverse to $b\Omega$ at $p$. i.e., $\exists\, c>0$ such that $ \vec{v}\cdot\nu_p \le -c$.
Then, for $0<a<1$, $\exists\, S_a>0$ so that
		\[acs \le \delta\left(p+s\vec{v}\right) \le s, \text{ for } 0 < s \le S_a.\]
\end{lemma}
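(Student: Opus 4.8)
The plan is to reduce the estimate to a one-variable mean-value argument along the ray $s\mapsto p+s\vec v$, using the signed distance function. Recall (as in the proof of Theorem~\ref{thm:H-L}) that the signed distance function $r$ to $b\Omega$ is $C^\infty$ on some neighbourhood $W$ of $b\Omega$, that $\nabla r=\nu$ on $b\Omega$ (where $\nu_q$ denotes the outward unit normal at $q\in b\Omega$), and that $r(x)=-\delta(x)$ for $x\in\Omega$; in particular $r<0$ exactly on $\Omega$.

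The upper bound is immediate: since $p\in b\Omega$, we have $\delta(p+s\vec v)\le\abs{(p+s\vec v)-p}=s\abs{\vec v}=s$ for every $s>0$.

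For the lower bound, set $g(s):=r(p+s\vec v)$, which is smooth for $s$ small enough that $p+s\vec v\in W$. Then $g(0)=r(p)=0$ and $g'(0)=\nabla r(p)\cdot\vec v=\nu_p\cdot\vec v\le-c$. Since $0<a<1$ and $c>0$, we have $g'(0)\le-c<-ac$, so by continuity of $g'$ there is an $S_a>0$, which we also take small enough that $p+s\vec v\in W$ for $0\le s\le S_a$, such that $g'(s)\le-ac$ for all $0\le s\le S_a$. Integrating from $0$ to $s$, for $0<s\le S_a$ we obtain
\[
r(p+s\vec v)=g(s)=\int_0^s g'(t)\,dt\;\le\;-acs<0,
\]
so $p+s\vec v\in\Omega$ and therefore $\delta(p+s\vec v)=-r(p+s\vec v)\ge acs$, which is the desired bound.

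The argument is essentially routine, and this is the easiest of the three key lemmas; the only input worth flagging is the standard package of facts about the signed distance function near a smooth boundary --- its $C^\infty$ regularity on a neighbourhood of $b\Omega$ together with the identity $\nabla r=\nu$ on $b\Omega$ --- which is precisely what converts the transversality hypothesis $\vec v\cdot\nu_p\le-c$ into the linear lower bound $g(s)\le-acs$. (Alternatively one could write $b\Omega$ locally as a $C^\infty$ graph near $p$ and estimate $\delta(p+s\vec v)$ directly, but working with $r$ keeps the constants transparent.)
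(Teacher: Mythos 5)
Your proof is correct and uses the same core ingredients as the paper's: the smoothness of the signed distance $r$ near $b\Omega$, the identity $\nabla r(p)=\nu_p$, and the upper bound $\delta(p+s\vec v)\le s$ by taking $p$ as a competitor. The only technical difference is in how you control the error away from $s=0$: the paper expands $r(p+s\vec v)$ to second order and bounds the remainder by $Cs^2$, where $C$ is a uniform bound on the Hessian of $r$ near $b\Omega$, giving $\delta(p+s\vec v)\ge cs\left(1-\tfrac{C}{c}s\right)$ and hence an explicit, $p$-independent $S_a$; you instead integrate $g'(s)=\nabla r(p+s\vec v)\cdot\vec v$ from $0$ and use continuity of $g'$ to keep it $\le -ac$. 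Both close the gap, but your continuity argument as written produces an $S_a$ that depends on $p$ and $\vec v$, whereas the later use of this lemma (shrinking a neighbourhood of $b\mathbb{B}$ that works for all boundary points and all curves of $\widehat{\Gamma}$) needs $S_a$ uniform. Your version can be upgraded to uniform with no extra ideas — $\nabla r$ is uniformly continuous on a compact neighbourhood of $b\Omega$, and the transversality constant $c$ is uniform — but it's worth stating; the paper's Taylor-with-Hessian-bound route makes the uniformity automatic.
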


\begin{proof}
Let $\left\{e_1,\ldots, e_n\right\}$ denote the standard basis for $\mathbb{R}^n$. Fix $p\in b\Omega$. By a rotation of $\Omega$ we may assume that $\nu_p  = e_n$. Let $\vec{v} = 
\langle v_1, \ldots, v_n\rangle$ be as in the statement. Then, $v_n\le -c < 0$. Let $r$ be the smooth defining function for $\Omega$ given by the signed distance to the boundary. We know that $\nabla r(p) = \nu_p = e_n$ (see \cite[Corollary 5.3]{Her-McN10}).
 
It is clear that $\delta(p+s\vec{v})\le s$. Now, let us show the other inequality. For small $s>0$, we have $p+s\vec{v} \in \Omega$. Let $C$ be the maximum of all the second derivatives of $r$ in a 
neighbourhood of $b\Omega$. Then, by Taylor's Theorem $\exists\, w \in \Omega$ near $b\Omega$, such that $p-w$ is parallel to $\vec{v}$ and
\[r(p+s\vec{v}) =  r(p) +s\sum_{j=1}^n\, \frac{\partial r}{\partial e_j}(p)v_j + \frac{s^2}{2}\sum_{j,k=1}^n\, \frac{\partial^2 r}{\partial e_j \partial e_k}(w)v_jv_k.\]
Since $\abs{r(x)}=\delta(x)$ and $p\in b\Omega$, $r(p)=0$. Also, since $\nabla r(p)=e_n$, we have 
$\nabla r(p) \cdot \vec{v} = v_n$. Hence,
\[\delta(p+s\vec{v})=\abs{r(p+s\vec{v})} \geq s\abs{v_n} - Cs^2 \geq s(c-Cs) = cs \left(1-\frac{C}{c}s\right).\]
Choose $S_a>0$ so that $1-\frac{C}{c}S_a \geq a$.
\end{proof}

To show the conclusion of the Main Theorem, it suffices by the Hardy-Littlewood Theorem to show
\[\abs{\nabla u(x)} \,\lesssim\, \frac{B(\delta(x))}{\delta(x)}, \text{ for } x\in U\cap\Omega,\]
where $U$ is a neighbourhood of $b\Omega$, or equivalently,
\[\sup\limits_{U\cap\Omega}\, \abs{\nabla u(x)}\frac{\delta(x)}{B(\delta(x))}  < \infty .\]
In the proof of Theorem \ref{thm:TransLipBall}, the main theorem for $\mathbb{B}$, we consider three such neighbourhoods and we will be comparing the above suprema on those neighbourhoods. The following lemma allows us to do 
that. This is a consequence of the maximum principle for sub-harmonic functions.

\begin{lemma}\label{lemma:EstOutsideCpt}
Let $B$ be a majorant. For $0<\delta_0<\delta_1$, let 
\[U_0 := \left\{ x\in\mathbb{R}^n \, :\, \delta(x) < \delta_0\right\} \quad\text{and}\quad U_1 := \left\{ x\in\mathbb{R}^n\, :\, \delta(x) < \delta_1\right\}.\]
If $u$ is a harmonic function in $\Omega$ satisfying
\[\sup\limits_{U_0\cap\Omega }\ \abs{\nabla u(x)}\frac{\delta(x)}{B(\delta(x))} = A_0 < \infty,\]
then
\[\sup\limits_{U_1\cap\Omega}\ \abs{\nabla u(x)}\frac{\delta(x)}{B(\delta(x))} \le \frac{\delta_1}{\delta_0}\cdot A_0\]
\end{lemma}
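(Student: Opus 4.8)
The plan is to deduce everything from the maximum principle applied to the subharmonic function $\abs{\nabla u}^2$ on the inner region $V:=\{x\in\Omega:\delta(x)>\delta_0\}$, together with the monotonicity built into the majorant $B$. Throughout set $g(x):=\abs{\nabla u(x)}\,\delta(x)/B(\delta(x))$ for $x\in\Omega$; it suffices to prove the pointwise bound $g(x)\le(\delta_1/\delta_0)A_0$ for each $x\in U_1\cap\Omega$ and then take the supremum. One may assume $B(t)>0$ for $t>0$, since otherwise $B\equiv0$ and there is nothing to prove, and any majorant not identically zero has this property.

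First I would record that $\abs{\nabla u}^2=\sum_j(\partial_ju)^2$ is subharmonic in $\Omega$: each $\partial_ju$ is harmonic, so
\[\Delta\abs{\nabla u}^2=2\sum_{j,k}(\partial_j\partial_ku)^2\ge0.\]
Next I would upgrade the hypothesis ``$g\le A_0$ on $U_0\cap\Omega=\{\delta<\delta_0\}\cap\Omega$'' to ``$g\le A_0$ on $\{x\in\Omega:\delta(x)\le\delta_0\}$''. Since $g$ is continuous on $\Omega$, this reduces to showing that $\{\delta<\delta_0\}\cap\Omega$ is dense in $\{\delta\le\delta_0\}\cap\Omega$: given $y\in\Omega$ with $\delta(y)=\delta_0$ and a nearest boundary point $p$, the segment $y_t:=p+t(y-p)$ satisfies $\delta(y_t)=t\delta_0$ for $0\le t\le1$ — the bound $\delta(y_t)\le t\delta_0$ is immediate, and the reverse follows from $\delta_0=\delta(y)\le\abs{y-y_t}+\delta(y_t)=(1-t)\delta_0+\delta(y_t)$ — so $y_t\in\{\delta<\delta_0\}\cap\Omega$ for $t<1$ and $y_t\to y$.

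With these two facts the core estimate is short. If $x\in U_1\cap\Omega$ has $\delta(x)\le\delta_0$, then $g(x)\le A_0\le(\delta_1/\delta_0)A_0$. If instead $\delta_0<\delta(x)<\delta_1$, I would apply the maximum principle for $\abs{\nabla u}^2$ on $V=\{\delta>\delta_0\}\cap\Omega$: this set is open, bounded, contains $x$, satisfies $\overline V\subset\Omega$ (any point of $\overline V\subset\overline\Omega$ has $\delta\ge\delta_0>0$, hence does not lie on $b\Omega$), has $\partial V\subset\{x\in\Omega:\delta(x)=\delta_0\}$, and has $\partial V\ne\emptyset$ because $\Omega$ is connected and $\emptyset\ne V\ne\Omega$ (the latter since $\overline V\subset\Omega$). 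Hence $\abs{\nabla u(x)}\le\max_{\partial V}\abs{\nabla u}$, and on $\partial V$ one has $\delta=\delta_0$, so $\abs{\nabla u}\le A_0\,B(\delta_0)/\delta_0$ by the previous paragraph; thus $\abs{\nabla u(x)}\le A_0\,B(\delta_0)/\delta_0$. Feeding this into $g(x)$ and using that $B$ is non-decreasing (so $B(\delta_0)\le B(\delta(x))$) together with $\delta(x)<\delta_1$,
\[g(x)=\abs{\nabla u(x)}\frac{\delta(x)}{B(\delta(x))}\le A_0\,\frac{B(\delta_0)}{B(\delta(x))}\cdot\frac{\delta(x)}{\delta_0}\le A_0\,\frac{\delta_1}{\delta_0}.\]
Taking the supremum over $x\in U_1\cap\Omega$ finishes the argument.

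The only point requiring some care — the step I would double-check — is the passage from the collar $\{\delta<\delta_0\}\cap\Omega$ to its closure in $\Omega$, i.e. verifying that the distance function is not locally constant so that every point of $\{\delta=\delta_0\}\cap\Omega$ is a limit of points with $\delta<\delta_0$; the segment-to-the-nearest-point computation above settles this and is what makes the continuity argument legitimate. One should also dispatch the degenerate case $V=\emptyset$ (which occurs when $\delta_0$ exceeds the inradius of $\Omega$): there $U_1\cap\Omega\subset\{\delta\le\delta_0\}$, so the first part of the core estimate already gives $g\le A_0\le(\delta_1/\delta_0)A_0$.
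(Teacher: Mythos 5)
Your proposal is correct and takes essentially the same route as the paper: apply the maximum principle to the subharmonic function $\abs{\nabla u}^2$ on $\{x\in\Omega:\delta(x)>\delta_0\}$ to reduce to a point at distance exactly $\delta_0$ from the boundary, then conclude via the monotonicity built into the majorant. You are somewhat more careful about the technical setup (the density argument letting you pass to $\{\delta\le\delta_0\}$, and the verification that the maximum principle applies on a relatively compact subdomain), and your final inequality chain uses only that $B$ is non-decreasing where the paper also invokes the monotonicity of $t/B(t)$, but the core argument is the same.
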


\begin{proof} Let $w\in \left(U_1\setminus U_0\right)\cap\Omega $. By the maximum principle for the sub-harmonic function $\abs{\nabla u}^2$,
$\exists\, x_w\in \Omega$ with $\delta(x_w) = \delta_0$ such that $\abs{\nabla u(w)} \le \abs{\nabla u(x_w)}$. By continuity, we have
\[\abs{\nabla u(x_w)}\frac{\delta(x_w)}{B(\delta(x_w))} \le A_0.\]
Hence,
\begin{align*}
\abs{\nabla u(w)}\frac{\delta(w)}{B(\delta(w))}&\le \abs{\nabla u(x_w)}\cdot \frac{\delta_1}{B(\delta_1)} \quad(\text{since } t/B(t) \text{ is non-decreasing})\\
&=\frac{\delta_1}{B(\delta_1)} \cdot \abs{\nabla u(x_w)} \cdot \frac{\delta(x_w)}{B(\delta(x_w))}\cdot\frac{B(\delta(x_w))}{\delta(x_w)}\\
&\le \frac{\delta_1}{B(\delta_1)}\cdot A_0\cdot\frac{B(\delta_0)}{\delta_0} \le \frac{\delta_1}{\delta_0}\cdot A_0 \quad(\text{since } B \text{ is non-decreasing}).
\end{align*}
\end{proof}	

\section{Main Theorem}

In this section we prove the Main Theorem which states that a transversally Lipschitz harmonic function is Lipschitz. We first consider the special case corresponding to the unit ball $\mathbb{B}$ in Theorem 
\ref{thm:TransLipBall}. The proof in this special case captures all the key ideas behind the result. We then prove the Main Theorem by attaching ($\mathbb{R}^n$-) sectors of balls to $b\Omega$ and 
then using the result for $\mathbb{B}$. Let us begin by defining the necessary notions. Let $\Omega \subset\subset \mathbb{R}^n$ have smooth boundary.

\begin{definition}\label{defn:FamTransCurves}
Let $U$ be a neighbourhood of $b\Omega$ and $\Gamma : b\Omega \times (-a,a) \to U$ be a $C^2$ map (for some $a>0$). For $p\in b\Omega$ and $t\in (-a,a)$, let $\gamma_p(t) := \Gamma(p,t)$.
$\Gamma$ is called a \textit{family of curves transversal to $\mathit{b\Omega}$} if the following hold;
\begin{enumerate}[(a)]
\item $\gamma_p(0) = p, \text{ for } p\in b\Omega$, and
\item $\exists\, c>0$ such that
\[\gamma_p'(t) \cdot \nu_p \le -c < 0, \text{ for }  p\in b\Omega \text{ and } t\in (-a,a).\]
\end{enumerate}
\end{definition}
Transversality, for us, means $\gamma_p'(t)\cdot\nu_p \ne 0$ for $p\in b\Omega$ and $t\in (-a,a)$. Using compactness of $b\Omega$, the continuity of 
$\gamma_p'$, and by restricting $t$ to a closed sub-interval around $0$, we get that this inner product is uniformly bounded away from $0$. By making the negative choice for sign we get condition (b) above.

We decrease $a$ and correspondingly shrink $U$, if necessary, so that $\Gamma$ is a $C^1$ bijection near $b\Omega$, and for $p\in b\Omega$, 
\[\gamma_p((0,a)) \subset U\cap \Omega \quad\text{and}\quad \gamma_p((-a,0)) \subset U\cap \Omega^c.\]
\begin{definition} Let $B$ be a majorant and $\Gamma$ be a family of curves transversal to $b\Omega$. A function $f$ defined on $\Omega$ is said to be \textit{transversally Lipschitz-$B$} along $\Gamma$ if there exists $C_f > 0$ such that for all $p\in b\Omega$ and $s,t>0$ and sufficiently small,
\[\abs{f(\gamma_p(s))-f(\gamma_p(t))} \le C_f\cdot B(\abs{s-t}).\]
\end{definition}

\begin{theorem}\label{thm:TransLipBall}
Let $\Gamma$ be a family of curves transversal to $b\mathbb{B}$. If $u$ is harmonic in $\mathbb{B}$ and Lipschitz-$B$ along $\Gamma$, then $u\in\Lambda_B(\mathbb{B})$.
\end{theorem}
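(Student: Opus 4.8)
The plan is to combine the Hardy--Littlewood criterion with a dilation argument and the three lemmas of Section~3. First, Theorem~\ref{thm:H-L} applies to $u$: it is harmonic, hence $C^1(\mathbb{B})$, and $u\in L^\infty(\mathbb{B})$ — boundedness near $b\mathbb{B}$ follows from the transverse Lipschitz-$B$ hypothesis along each curve (evaluated against a fixed reference parameter) together with continuity of $\Gamma$ and compactness of $b\mathbb{B}$, and on compact subsets of $\mathbb{B}$ boundedness is automatic. So it suffices to produce a neighbourhood $U$ of $b\mathbb{B}$ and a constant $C$ with $\abs{\nabla u(x)}\le C\,B(\delta(x))/\delta(x)$ on $U\cap\mathbb{B}$. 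For $\tfrac12<\lambda<1$ set $u_\lambda(x)=u(\lambda x)$; then $u_\lambda$ is harmonic on $\{\abs x<1/\lambda\}\supset\overline{\mathbb{B}}$, so $u_\lambda\in C^\infty(\overline{\mathbb{B}})$. For $q\in b\mathbb{B}$ and $\lambda$ near $1$ there is a small $s_q>0$ with $\abs{\gamma_q(s_q)}=\lambda$, and the reparametrized curves $\gamma^\lambda_q(t):=\tfrac1\lambda\gamma_q(s_q+t)$ form a family $\Gamma_\lambda$ transversal to $b\mathbb{B}$ whose transversality constant, $C^2$-bounds and covering of a fixed collar of $b\mathbb{B}$ are all uniform in $\lambda$, and along which $u_\lambda$ is transversally Lipschitz-$B$ with the original constant $C_u$ (because $u_\lambda(\gamma^\lambda_q(t))=u(\gamma_q(s_q+t))$). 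Since $u_\lambda\to u$ in $C^1$ on compact subsets of $\mathbb{B}$ as $\lambda\to1$, it is enough to bound $\abs{\nabla u_\lambda}\,\delta/B(\delta)$ on a fixed neighbourhood of $b\mathbb{B}$ by a constant independent of $\lambda$.

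Fix $\lambda$ and work near a boundary point $q$ in a ball $B(q,\rho)$ with $\rho$ small and independent of $\lambda$. Let $M$ be the inward unit vector field of $\Gamma_\lambda$, let $M_0:=M(q)$ be its frozen value, and complete $M_0$ to a constant orthonormal frame $M_0,N_0^{(1)},\dots,N_0^{(n-1)}$; then $\abs{M(y)-M_0}\lesssim\rho$ on $B(q,\rho)$. Write $\Psi_\lambda:=\sup_{B(q,\rho)\cap\mathbb{B}}\abs{\nabla u_\lambda}\,\delta/B(\delta)$. Given $x\in B(q,\rho/2)\cap\mathbb{B}$ on the curve $\gamma=\gamma^\lambda_p$ at parameter $t_0$, Lemma~\ref{lemma:TransDistEst} gives $\delta(x)\approx t_0$. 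A second-order Taylor expansion of $u_\lambda$ along $\gamma$ (or along the $M_0$-line through $x$) with step $\approx\delta(x)$ writes $Mu_\lambda(x)$ as a first difference of $u_\lambda$ along the curve — controlled by $C_uB(\delta(x))/\delta(x)$, up to errors of size $(\rho+\delta(x))\,\Psi_\lambda\,B(\delta(x))/\delta(x)$ coming from $M\ne M_0$ and from the bending of $\gamma$ — plus a remainder built from second derivatives of $u_\lambda$. The second derivatives are estimated by first derivatives via Lemma~\ref{lemma:HarEst} on balls of radius $\approx\delta(x)$ (admissible by Lemma~\ref{lemma:TransDistEst}); the contributions from the curvature of $\gamma$ and from $M\ne M_0$ again carry a factor $\delta(x)$ or $\rho$, whereas the genuinely transverse second derivative is rewritten, using $\Delta u_\lambda=0$, as $M_0M_0u_\lambda=-\sum_iN_0^{(i)}N_0^{(i)}u_\lambda$ and handled jointly with a companion estimate for the tangential first derivatives: each $N_0^{(i)}u_\lambda$ is harmonic, and integrating $M_0N_0^{(i)}u_\lambda=N_0^{(i)}(M_0u_\lambda)$ along the $M_0$-direction from $x$ to a point of $\mathbb{B}$ at a fixed positive distance from $b\mathbb{B}$ — where $\abs{\nabla u_\lambda}$ is bounded by a constant independent of $\lambda$ — bounds $N_0^{(i)}u_\lambda(x)$ in terms of $B(\delta(x))/\delta(x)$ and $\sup$'s of $M_0u_\lambda$, the regularity of $B$ (inequality~\eqref{eq:RegMaj}, in the form $\delta\int_\delta^\infty B(t)/t^2\,dt\lesssim B(\delta)$) being precisely what tames the resulting integral. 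Running these estimates into one another and expressing $\nabla u_\lambda$ in the frame $M_0,N_0^{(i)}$, one arrives at a bound of the form
\[
\abs{\nabla u_\lambda(x)}\,\frac{\delta(x)}{B(\delta(x))}\;\le\;C_1+\varepsilon(\rho)\,\Psi_\lambda ,\qquad x\in B(q,\rho/2)\cap\mathbb{B},\ \delta(x)\ \text{small},
\]
where $C_1$ is independent of $\lambda$ and $\rho$, and $\varepsilon(\rho)\to0$ as $\rho\to0$ (uniformly in $\lambda$).

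It remains to absorb and pass to the limit. For fixed $\lambda$ the quantity $A_\lambda:=\sup\{\abs{\nabla u_\lambda(x)}\delta(x)/B(\delta(x)):x\in\mathbb{B},\ \delta(x)<\delta_0\}$ is finite, since $u_\lambda\in C^\infty(\overline{\mathbb{B}})$ and $t/B(t)$ is non-decreasing. Cover the $\delta_0$-collar of $b\mathbb{B}$ by finitely many $B(q_i,\rho/2)$ with $\delta_0\approx\rho$; the displayed estimate on each patch, together with Lemma~\ref{lemma:EstOutsideCpt} used to dominate $\sup_{B(q_i,\rho)\cap\mathbb{B}}\abs{\nabla u_\lambda}\delta/B(\delta)$ by a fixed multiple of $A_\lambda$, gives $A_\lambda\le C_1+C_2\,\varepsilon(\rho)\,A_\lambda$ with $C_2$ independent of $\lambda$ and $\rho$; fixing $\rho$ small so that $C_2\varepsilon(\rho)\le\tfrac12$ forces $A_\lambda\le2C_1$, uniformly in $\lambda$. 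A further application of Lemma~\ref{lemma:EstOutsideCpt} spreads this to a fixed neighbourhood $U$ of $b\mathbb{B}$; letting $\lambda\to1$ yields $\abs{\nabla u(x)}\le C\,B(\delta(x))/\delta(x)$ on $U\cap\mathbb{B}$, and Theorem~\ref{thm:H-L} then gives $u\in\Lambda_B(\mathbb{B})$.

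The main obstacle is the per-point estimate of the second paragraph. One must organize the Taylor expansions along $\Gamma_\lambda$ and the interior estimates so that the leading term is exactly the Hardy--Littlewood rate $B(\delta(x))/\delta(x)$ while every error term is genuinely absorbable — carrying a factor $\delta(x)$ or $\rho$ — and, above all, dispose of the transverse second derivative of $u_\lambda$, which is not itself small: this is done only by trading it (via harmonicity) for tangential first derivatives and running a coupled argument that simultaneously controls $M_0u_\lambda$ and the $N_0^{(i)}u_\lambda$, with the regularity of $B$ entering the tangential integrals. Keeping all constants genuinely independent of $\lambda$ throughout is what makes the final absorption and the limit $\lambda\to1$ legitimate; by contrast, once the displayed estimate is in hand, the absorption via Lemma~\ref{lemma:EstOutsideCpt} and the conclusion via Theorem~\ref{thm:H-L} are routine.
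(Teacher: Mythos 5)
Your overall architecture matches the paper's proof quite closely: reduce to the Hardy--Littlewood criterion; dilate to $u_\lambda\in C^\infty(\overline{\mathbb{B}})$ to gain a priori finiteness of $\sup \abs{\nabla u_\lambda}\,\delta/B(\delta)$; rescale $\Gamma$ to a transversal family $\Gamma_\lambda$ along which $u_\lambda$ inherits the Lipschitz-$B$ bound; freeze the transversal vector field to a constant $M_0$, Taylor-expand along the curve to estimate $Mu_\lambda$, use Lemma~\ref{lemma:HarEst} for interior gradient bounds and Lemma~\ref{lemma:TransDistEst} for the admissible radii; estimate the tangential derivatives $N_0^{(i)}u_\lambda$ by integrating $N_0^{(i)}(M_0u_\lambda)$ in the $M_0$-direction, with the second half of the regularity inequality~\eqref{eq:RegMaj} taming the integral; patch, absorb via Lemma~\ref{lemma:EstOutsideCpt}, and let $\lambda\to1$. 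All the right tools are in place.

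There is one substantive point where your sketch diverges from the paper and where, as written, it does not actually produce a small coefficient. You propose to dispose of the transverse second derivative $M_0M_0u_\lambda$ appearing in the Taylor remainder by rewriting it via $\Delta u_\lambda=0$ as $-\sum_iN_0^{(i)}N_0^{(i)}u_\lambda$ and ``handling it jointly'' with the tangential estimates. That trade does not help: after Lemma~\ref{lemma:HarEst}, each $N_0^{(i)}N_0^{(i)}u_\lambda$ is bounded by $\frac{1}{\delta}\sup\abs{N_0^{(i)}u_\lambda}\le\frac{1}{\delta}\sup\abs{\nabla u_\lambda}$, exactly the same size as bounding $M_0M_0u_\lambda$ by $\frac{1}{\delta}\sup\abs{M_0u_\lambda}$ directly, so no smallness is gained; and if you instead try to feed the tangential first-derivative estimate back in, you create a circular dependence whose coefficients are not visibly $<1$. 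The paper never invokes the Laplace equation at this point. Its smallness mechanism is elsewhere: the Taylor step is taken to be a \emph{small multiple} of the boundary distance, $s-\widehat T_{x_0}=\kappa\,\delta(x_0)\cdot\min\{c^2/(1024n),\,c/(4C_1)\}/4$ as in~\eqref{eq:sChoice}, with $\kappa$ a free parameter tuned at the end. The remainder term then picks up an explicit factor $\kappa$ (times the a priori sup), while the leading first-difference term grows only like $O(1/\kappa)$, which is harmless for the absorption. Your phrase ``step $\approx\delta(x)$'' obscures this, and your claim that the resulting $C_1$ is independent of $\rho$ while $\varepsilon(\rho)\to 0$ is not consistent: shrinking the step to make the remainder small necessarily inflates the leading constant, as the paper's $C=C(\kappa)=O(1/\kappa)$ makes explicit. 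This dependence is perfectly acceptable --- what matters is independence of $\lambda$ --- but the mechanism needs to be the tunable Taylor step, not the Laplace-equation swap. With that correction, the remainder of your sketch (the tangential integration, the use of~\eqref{eq:RegMaj}, Lemma~\ref{lemma:EstOutsideCpt}, and the limit $\lambda\to1$) goes through as in the paper.
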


\begin{proof}
We re-parametrize $\Gamma$ by the arc-length starting at $b\Omega$ to get $\abs{\frac{d}{dt}\left(\gamma_p(t)\right)}=1$. This does not affect the transversality of 
$\Gamma$ or $u$ being Lipschitz-$B$ with respect to $\Gamma$. So, $\Gamma : b\mathbb{B}\times (-a,a) \to V$ is a $C^1$ bijection for some $a>0$ and $V$ a neighbourhood of $b\mathbb{B}$. It suffices, by 
Hardy-Littlewood Theorem, to show
\begin{equation}\label{eq:GradEst}\sup\limits_{U\cap\mathbb{B}}\, \abs{\nabla u(x)}\frac{\delta(x)}{B(\delta(x))} \le C < \infty\end{equation}
for some neighbourhood $U$ of $b\mathbb{B}$. Let
\[C_u^U := \sup\limits_{U\cap\mathbb{B}}\, \abs{\nabla u(x)}\frac{\delta(x)}{B(\delta(x))}.\]
Notice that if $u$ is in $C^1\left(\overline{\mathbb{B}}\right)$, then condition \eqref{eq:GradEst} is automatically satisfied with $C$ depending on $u$. This is our starting point. For $\frac{1}{2} < \lambda < 1$, 
define $u_{\lambda}(x) := u(\lambda x)$. Note that $u_{\lambda}\in C^{\infty}\left(\overline{\mathbb{B}}\right)$ and harmonic in $\mathbb{B}$. Since $t/B(t)$ is non-decreasing,
\[C_{u,\lambda}^U := \sup\limits_{U\cap\mathbb{B}}\, \abs{\nabla u_\lambda (x)}\frac{\delta(x)}{B(\delta(x))} \le ||\abs{\nabla u_\lambda}||_{\infty}\cdot\frac{\operatorname{diam} \mathbb{B}}{B\left(\operatorname{diam} \mathbb{B}\right)} < \infty.\] 
We will show that $u_\lambda$ is Lipschitz-$B$ along $\Gamma_\lambda$, a family of curves transversal to $b\mathbb{B}$, which is related to $\Gamma$. Using this we then show that $C_{u,\lambda}^U$ can 
indeed be dominated by a constant independent of $\lambda$. We conclude that $u$ satisfies \eqref{eq:GradEst} by letting $\lambda \to 1$.

Since $\Gamma$ gives a foliation of $V$ by curves, we get a projection $\pi_\Gamma : V\cap \mathbb{B} \to b\mathbb{B}$ along $\Gamma$, i.e., for $x\in V\cap\mathbb{B}$, $\exists!\, \pi_\Gamma(x)\in 
b\mathbb{B}$ and $0 < T_x < a$, such that $\Gamma(\pi_\Gamma(x), T_x) = x$. For 
simplicity of notation let us drop the subscript $\Gamma$ in $\pi_\Gamma$ and simply call it $\pi$. Define $\Gamma_\lambda$ by, 
\[\Gamma_\lambda(p,t) = \frac{1}{\lambda}\cdot\Gamma\left(\pi(\lambda p), t+T_{\lambda p}\right), \quad p\in b\mathbb{B} \text{ and } \abs{t} \text { small}.\]
\begin{figure}[h]
\begin{center}
\includegraphics[width=0.43\textwidth]{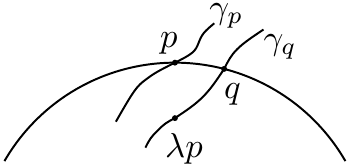}
\caption{Defining $\Gamma_\lambda$}
\label{GammaLambdaDefn}
\end{center}
\end{figure}

We restrict $\lambda$ sufficiently close to 1 to make $\Gamma_\lambda$ well-defined near $b\mathbb{B}$. In all of the analysis that follows we will be working in a small 
neighbourhood of such a boundary point. We will exploit this localization when we generalize the result to a general bounded domain with smooth boundary. First, let us verify that 
$\Gamma_\lambda$ is a family of curves transversal to $b\mathbb{B}$. Let $p$, $\lambda p$ and $q$ be as in Figure \ref{GammaLambdaDefn}. Clearly,
\[\Gamma_\lambda(p,0) = \frac{1}{\lambda}\cdot\Gamma\left(\pi(\lambda p), T_{\lambda p}\right)=\frac{1}{\lambda}\cdot\lambda p=p.\]
Now, let us check transversality. Since, $\gamma_q\left(T_{\lambda p}\right) = \lambda p$, we have
\[\lambda p - q = \gamma_q\left(T_{\lambda p}\right) - \gamma_q(0) = T_{\lambda p}\gamma_q'(t^*), \text{ for some } 0 < t^* < T_{\lambda p}.\]
So, $p=\frac{1}{\lambda}\left(q+T_{\lambda p}\gamma_q'(t^*)\right)$. Hence,
\begin{align*}
\frac{\partial\Gamma_\lambda}{\partial t}(p,t)\cdot \nu_p &= \frac{\partial\Gamma_\lambda}{\partial t}(p,t)\cdot p = \frac{1}{\lambda}\cdot\frac{\partial\Gamma}{\partial t}(q,t+T_{\lambda p})\cdot\frac{1}{\lambda}\left(q+T_{\lambda p}\gamma_q'(t^*)\right)\\
&=\frac{1}{\lambda^2}\left(\gamma_q'(t+T_{\lambda p})\cdot q + T_{\lambda p}\gamma_q'(t+T_{\lambda p})\cdot\gamma_q'(t^*)\right)\\
&\le \frac{1}{\lambda^2}\left(-c+T_{\lambda p}\right).
\end{align*}
As $\lambda \to 1$, $T_{\lambda p} \to 0$. So, choose $\lambda \ge (1/2)$ close enough to $1$ so that $T_{\lambda p} \le (c/2)$ for all $p\in b\mathbb{B}$. Then, we have
\[\frac{\partial\Gamma_\lambda}{\partial t}(p,t)\cdot \nu_p \le -\frac{c}{2}.\]
Without loss of generality let us suppose that the Lipschitz-$B$ constant of $u$ along $\Gamma$ is $1$. Then, we also have
\begin{align*}
\abs{u_\lambda\left(\Gamma_\lambda(p,s)\right)-u_\lambda\left(\Gamma_\lambda(p,t)\right)} &= \abs{u\left(\Gamma\left(\pi(\lambda p), s+T_{\lambda p}\right)\right)-u\left(\Gamma\left(\pi(\lambda p), t+T_{\lambda p}\right)\right)}\\
&= \abs{u\left(\gamma_q(s+T_{\lambda p})\right)-u\left(\gamma_q(t+T_{\lambda p})\right)}\\
&\le B\left(\abs{s-t}\right).
\end{align*}
This shows that $u_\lambda$ is Lipschitz-$B$ along $\Gamma_\lambda$. Also, notice that
\[\frac{\partial\Gamma_\lambda}{\partial t}(p,t) = \frac{1}{\lambda}\cdot\frac{\partial\Gamma}{\partial t}(q,t+T_{\lambda p}),\text{ and hence } \abs{\frac{\partial\Gamma_\lambda}{\partial t}(p,t)} = \frac{1}{\lambda}.\]

Let us denote $u_\lambda$ by $v$ and $\Gamma_\lambda$ by $\widehat{\Gamma}$. Let $\widehat{\gamma}$ denote the curves of $\widehat{\Gamma}$. Let $\widehat{\pi}$ be the projection along the curves of $\widehat{\Gamma}$ and for $x\in V$, let $\widehat{T}_x$ be such 
that $\widehat{\Gamma}\left(\widehat{\pi}(x), \widehat{T}_x\right) = x$. Let $M$ be the unit vector field given by differentiation along curves of $\Gamma_\lambda$, i.e.,
\[Mf(x) = \lambda\nabla f(x) \cdot \left.\frac{\partial}{\partial t}\left(\widehat{\Gamma}\left(\widehat{\pi}(x),t\right)\right)\right|_{t=\widehat{T}_x} = \lambda\nabla f(x) \cdot \widehat{\gamma}_{\widehat{\pi}(x)}'(\widehat{T}_x).\]
Let us now choose two neighbourhoods of $b\mathbb{B}$ to work in. Let 
\[0 < \epsilon < \min\left\{\frac{1}{6},  \frac{c^2}{6400n(n-1)C_2}\right\},\]
where $C_2$ is the constant from the inequality \eqref{eq:RegMaj}. By the uniform continuity of the tangent vectors to the curves of $\Gamma$, $\exists\, \delta_0 > 0$ such that
\[x,y\in V\cap\mathbb{B}, \text{ and }\abs{x-y} < \delta_0 \quad\implies\quad \abs{\widehat{\gamma}_{\widehat{\pi}(x)}'(\widehat{T}_x) - \widehat{\gamma}_{\widehat{\pi}(y)}'(\widehat{T}_{y})} < \epsilon.\]
Decrease $\delta_0$, if necessary, so that $\delta_0 < \epsilon$ and 
$\left\{x\in\mathbb{R}^n \, :\, \delta(x) < 2\delta_0\right\}$ is contained in the neighbourhood corresponding to the bijection given by $\widehat{\Gamma}$, and also contained in the part of the neighbourhood coming from Lemma 
\ref{lemma:TransDistEst} that lies in $\Omega$. Let 
\[U := \left\{ x\in\mathbb{R}^n \, :\, \delta(x) < \delta_0\right\}\quad\text{ and }\quad U' := \left\{ x\in\mathbb{R}^n \, :\, \delta(x) < \frac{\delta_0}{4}\right\}.\]

Fix $x_0\in U'\cap\mathbb{B}$. Now we begin estimating $Mv(x_0)$. Let $p=\widehat{\pi}(x_0)$. For $s>\widehat{T}_{x_0}$, we have
\[v\left(\widehat{\gamma}_{p}(s)\right) - v\left(\widehat{\gamma}_{p}(\widehat{T}_{x_0})\right) = (s-\widehat{T}_{x_0})\cdot \frac{Mv(x_0)}{\lambda} + \frac{(s-\widehat{T}_{x_0})^2}{2}\cdot\frac{d^2}{dt^2}\left(v\left(\widehat{\gamma}_p(t)\right)\right),\]
for some $\widehat{T}_{x_0} < t < s$. Since $v$ is Lipschitz-$B$ along $\widehat{\gamma}_p$, we have
\begin{equation}\label{eq:Dagger}
\abs{Mv(x_0)} \le \frac{B(s-\widehat{T}_{x_0})}{s-\widehat{T}_{x_0}} + \frac{s-\widehat{T}_{x_0}}{2}\cdot\abs{\frac{d^2}{dt^2}\left(v\left(\widehat{\gamma}_p(t)\right)\right)}.
\end{equation}
In what follows, we will estimate any dependence on $\lambda$ using $1/2 < \lambda < 1$ so that the inequalities we obtain are independent of $\lambda$. On many occasions this step may not be shown 
explicitly. 

Let $\vec{M_0}= \lambda\widehat{\gamma}_p'(\widehat{T}_{x_0})$ and $M_0 f(x) = \lambda\nabla f(x) \cdot \vec{M_0}$. Let 
\begin{equation}\label{eq:sChoice}
s= \widehat{T}_{x_0}+\dfrac{\kappa\delta(x_0)}{4}\min\left\{\frac{c^2}{1024n} ,\frac{c}{4C_1}\right\},
\end{equation}
where $0 < \kappa < 1$ is to be chosen later and $C_1$ is the maximum of the length of the second derivatives of the curves of $\widehat{\Gamma}$. So, 
\[\abs{\widehat{\gamma}_p(t) - x_0} = \abs{\widehat{\gamma}_p(t) - \widehat{\gamma}_p(\widehat{T}_{x_0})} \le 2\abs{t-\widehat{T}_{x_0}} \le \frac{\delta(x_0)}{2} < \frac{\delta_0}{8}.\]
 Hence, $\widehat{\gamma}_p(t)\in U\cap\mathbb{B}$, and $\abs{Mv(\widehat{\gamma}_p(t)) - M_0v(\widehat{\gamma}_p(t))} < \epsilon\abs{\nabla v(\widehat{\gamma}_p(t))}$. Using this, we compute the last term in \eqref{eq:Dagger} to get
\begin{multline}\label{eq:DaggerDagger}
\abs{\frac{d^2}{dt^2}\left(v\left(\widehat{\gamma}_p(t)\right)\right)} \le \frac{1}{\lambda^2}\abs{\nabla(M_0 v)(\widehat{\gamma}_p(t))} + C_1\abs{\nabla v (\widehat{\gamma}_p(t))} \\
+ 2\epsilon\max\left\{\abs{v_{xx}}, \abs{v_{xy}}, \abs{v_{yy}}\right\}(\widehat{\gamma}_p(t)).
\end{multline}
Now, we want to use Lemma \ref{lemma:HarEst} to estimate the first and last term. To do this we need to estimate $\displaystyle \delta\left(\widehat{\gamma}_p(t)\right)$. We now show that $\displaystyle \delta\left(\widehat{\gamma}_p(t)\right) \ge c\delta(x_0)/8$;
\[\widehat{\gamma}_p(t) - p = \widehat{\gamma}_p(t) - \widehat{\gamma_p}(0) = t\,\widehat{\gamma}_p'(t^*) \quad\text{for some } 0< t^* < t.\]
Hence, by Lemma \ref{lemma:TransDistEst} with $a=1/2$, we have
\[\delta(\widehat{\gamma}_p(t)) = \delta\left(p+t\, \widehat{\gamma}_p'(t^*)\right) \ge \frac{c}{4}\cdot t\cdot \abs{\widehat{\gamma}_p'(t^*)} = \frac{ct}{4\lambda} \ge \frac{ct}{4}\ge \frac{c\widehat{T}_{x_0}}{4}.\]
Similarly,
\[\delta(x_0) = \delta(\widehat{\gamma}_p(\widehat{T}_{x_0})) = \delta\left(p+\widehat{T}_{x_0}\cdot \widehat{\gamma}_p'(t^{**})\right) \le \widehat{T}_{x_0}\cdot\abs{\widehat{\gamma}_p'(t^{**})} = \frac{\widehat{T}_{x_0}}{\lambda} \le 2\widehat{T}_{x_0}.\]
Combining this with the previous inequality, we get
\[\delta(\widehat{\gamma}_p(t)) \ge \frac{c\delta(x_0)}{8}.\]
Notice in \eqref{eq:DaggerDagger} that $v_x$ and $v_y$ are harmonic in $\mathbb{B}$. Since $M_0$ is a constant coefficient vector field, $M_0 v$ is harmonic too. By Lemma \ref{lemma:HarEst} we have the following;
\[\abs{\nabla(M_0 v)(\widehat{\gamma}_p(t))} \,\le\, \frac{16n}{c\delta(x_0)}\sup\left\{ \abs{M_0v(y)}\, :\, \abs{y-\widehat{\gamma}_p(t)} = \frac{c\delta(x_0)}{16}\right\},\]
and
\[\max\left\{\abs{v_{xx}}, \abs{v_{xy}}, \abs{v_{yy}}\right\}(\widehat{\gamma}_p(t)) \le \frac{16n}{c\delta(x_0)}\sup\left\{ \abs{\nabla v(y)} : \abs{y-\widehat{\gamma}_p(t)} = \frac{c\delta(x_0)}{16}\right\}.\]
For $\abs{y-\widehat{\gamma}_p(t)} = c\delta(x_0)/16$, we have $\delta(y) \ge c\delta(x_0)/16$ and hence
\begin{align*}
\abs{M_0v(y)} &= \abs{M_0v(y)} \cdot \frac{\delta(y)}{B(\delta(y))} \cdot \frac{B(\delta(y))}{\delta(y)} \le \abs{M_0v(y)} \cdot\frac{\delta(y)}{B(\delta(y))}\cdot \frac{B\left(\frac{c\delta(x_0)}{16}\right)}{\left(\frac{c\delta(x_0)}{16}\right)}\\
& \le \frac{16}{c}\cdot\frac{B(\delta(x_0))}{\delta(x_0)}\cdot\abs{M_0v(y)} \cdot \frac{\delta(y)}{B(\delta(y))} \le \frac{16}{c}\cdot\frac{B(\delta(x_0))}{\delta(x_0)}\cdot C_{u,\lambda}^U.
\end{align*}
The last inequality follows since $\delta(y) < \delta_0$. So,
\[\abs{\nabla(M_0 v)(\widehat{\gamma}_p(t))} \,\le\, \frac{256n}{c^2\delta(x_0)}\cdot\frac{B(\delta(x_0))}{\delta(x_0)}\cdot C_{u, \lambda}^U.\]
A similar calculation yields,
\[\max\left\{\abs{v_{xx}}, \abs{v_{xy}}, \abs{v_{yy}}\right\}(\widehat{\gamma}_p(t)) \le \frac{256n}{c^2\delta(x_0)}\cdot\frac{B(\delta(x_0))}{\delta(x_0)}\cdot C_{u,\lambda}^U.\]
Let us now estimate the only remaining term (the middle term) in \eqref{eq:DaggerDagger};
\begin{align*}
\abs{\nabla v(\widehat{\gamma}_p(t))} &= \abs{\nabla v(\widehat{\gamma}_p(t))} \cdot \frac{\delta(\widehat{\gamma}_p(t))}{B(\delta(\widehat{\gamma}_p(t)))} \cdot \frac{B(\delta(\widehat{\gamma}_p(t)))}{\delta(\widehat{\gamma}_p(t))}\\
&\le \frac{8}{c}\cdot\frac{B(\delta(x_0))}{\delta(x_0)}\cdot C_{u,\lambda}^U.
\end{align*}
Hence, from \eqref{eq:DaggerDagger}, we have
\begin{align*}
\abs{\frac{d^2}{dt^2}\left(v\left(\widehat{\gamma}_p(t)\right)\right)} &\le 4\abs{\nabla(M_0 v)(\widehat{\gamma}_p(t))} + C_1\abs{\nabla v (\widehat{\gamma}_p(t))} \\
 &\hspace{1in} + 2\epsilon\max\left\{\abs{v_{xx}}, \abs{v_{xy}}, \abs{v_{yy}}\right\}(\widehat{\gamma}_p(t))\\
 &\le \frac{B(\delta(x_0))}{\delta(x_0)}\left(\frac{1024n}{c^2\delta(x_0)}(1+\epsilon)+\frac{8}{c}\cdot C_1\right)C_{u,\lambda}^U.
\end{align*}
Using this in \eqref{eq:Dagger} we get,
\[\abs{Mv(x_0)} \le \frac{B(s-\widehat{T}_{x_0})}{s-\widehat{T}_{x_0}} + \frac{s-\widehat{T}_{x_0}}{2}\cdot\frac{B(\delta(x_0))}{\delta(x_0)}\left(\frac{1024n}{c^2\delta(x_0)}(1+\epsilon)+\frac{8}{c}\cdot C_1\right)C_{u,\lambda}^U.\]
Using the choice of $s$ from \eqref{eq:sChoice}, we have a positive constant $C=C(\kappa) = \operatorname{O}(1/\kappa)$,
\[\abs{Mv(x_0)} \le C\cdot\frac{B(\delta(x_0))}{\delta(x_0)} + \kappa\frac{B(\delta(x_0))}{\delta(x_0)}\cdot\left(\frac{1+\epsilon}{8}+\frac{\delta(x_0)}{4}\right)C_{u,\lambda}^U.\]
Later, we will choose $\kappa$ sufficiently small to make the coefficient in front of $C_{u,\lambda}^U$ small. This choice will make $C$ large, but for our purposes it does not matter. Since $\delta(x_0) < \delta_0 < \epsilon$,
\begin{equation}\label{eq:MEst}
\abs{Mv(x_0)}\frac{\delta(x_0)}{B(\delta(x_0))} \le C + \kappa\left(\frac{1+3\epsilon}{8}\right) C_{u,\lambda}^U.
\end{equation}
Note that the above estimate holds for any $x_0\in U'\cap\mathbb{B}$. In what follows, let us restrict $x_0$ further close to $b\mathbb{B}$, i.e., $x_0\in U''\cap\mathbb{B}$, where 
\[U'' := \left\{ x\in\mathbb{R}^n \, :\, \delta(x) < \frac{\delta_0}{10}\right\}.\]

Let us now estimate the derivatives of $v$ in directions orthogonal to $\vec{M_0}$. Let $\vec{N_0}$ be a unit vector orthogonal to $\vec{M_0}$. Let $N_0f = \nabla f \cdot \vec{N_0}$. To estimate $N_0v(x_0)$, 
we use the fundamental theorem of calculus in the $\vec{M_0}$ direction. Since $M_0$ and $N_0$ are constant coefficient, they commute and also preserve harmonic functions. Then, we proceed to use the 
estimate on $M_0$ by noting that $\abs{M_0N_0 v} = \abs{N_0M_0 v} \le \abs{\nabla M_0 v}$ and applying Lemma \ref{lemma:HarEst} to $\abs{\nabla M_0 v}$.
 
Let $y_0 = x_0 + (\delta_0/10)\vec{M_0}$. So,
\begin{align*}
N_0v(x_0) &= N_0v(y_0) - \int\limits_0^{\delta_0/10}\, M_0N_0v(x_0+s\vec{M_0})\, ds, \text{ and hence}\\
\abs{N_0v(x_0)} &\le \abs{N_0v(y_0)} + \int\limits_0^{\delta_0/10}\, \abs{\nabla\left(M_0v\right)(x_0+s\vec{M_0})}\, ds.
\end{align*}
Since $\exists\, P^{M_0}_{x_0} \in b\mathbb{B}$ such that $x_0-P^{M_0}_{x_0}$ is parallel to $M_0$, by Lemma 
\ref{lemma:TransDistEst}, we have
\[\frac{c}{4}\left(\delta(x_0)+s\right)\le\delta(x_0+s\vec{M_0}) \le \delta(x_0) + s.\]
Hence,
\begin{multline}\label{eq:N0Est}
\abs{N_0v(x_0)} \le \abs{N_0v(y_0)}\\
+ \frac{8n}{c}\int\limits_0^{\delta_0}\, \frac{1}{\left(\delta(x_0)+s\right)} \sup\left\{ \abs{M_0v(\xi)}\, :\, \abs{\xi-(x_0+s\vec{M_0})} = \frac{c}{8}\left(\delta(x_0)+s\right)\right\}\, ds.
\end{multline}
For $\abs{\xi-(x_0+s\vec{M_0})} = \dfrac{c}{8}\left(\delta(x_0)+s\right)$, we have, by Lemma \ref{lemma:TransDistEst},
\[\frac{c}{8}\left(\delta(x_0)+s\right) \le \delta(\xi) \le \left(1+\frac{c}{8}\right)\left(\delta(x_0)+s\right), \]
and hence
\[\frac{B(\delta(\xi))}{\delta(\xi)} \le \frac{B\left(\frac{c}{8}\left(\delta(x_0)+s\right)\right)}{\frac{c}{8}\left(\delta(x_0)+s\right)}.\]
Since $\abs{\xi-x_0} < \delta_0/8$, $\abs{M_0v(\xi)} \le \abs{Mv(\xi)} + \epsilon\abs{\nabla v(\xi)}$. Also, since $\delta(\xi) < \delta_0/4$, we can use \eqref{eq:MEst} to estimate $\abs{Mv(\xi)}$ to obtain
\begin{align*}
\abs{M_0v(\xi)} &\le \left(\abs{Mv(\xi)}+\epsilon\abs{\nabla v(\xi)}\right)\frac{\delta(\xi)}{B(\delta(\xi))}\cdot\frac{B\left(\frac{c}{8}\left(\delta(x_0)+s\right)\right)}{\frac{c}{8}\left(\delta(x_0)+s\right)}\\
&\le \frac{B\left(\frac{c}{8}\left(\delta(x_0)+s\right)\right)}{\frac{c}{8}\left(\delta(x_0)+s\right)}\left(C + \frac{\kappa(1+3\epsilon)}{8}C_{u,\lambda}^U+\epsilon C_{u,\lambda}^U\right).
\end{align*}
Since $B$ is non-decreasing and regular,
\[\frac{64n}{c^2}\int\limits_0^{\delta_0}\, \frac{B\left(\frac{c}{8}\left(\delta(x_0)+s\right)\right)}{\left(\delta(x_0)+s\right)^2}\, ds\, \le\, \frac{64n}{c^2}\int\limits_0^{\delta_0}\, \frac{B\left(\delta(x_0)+s\right)}{\left(\delta(x_0)+s\right)^2}\, ds\, \le\, \frac{64nC_2}{c^2}\cdot\frac{B\left(\delta(x_0)\right)}{\delta(x_0)},\]
where $C_2$ is the constant from the inequality \eqref{eq:RegMaj}. Using this in \eqref{eq:N0Est}, we get
\[\abs{N_0v(x_0)} \le \abs{N_0v(y_0)} + \frac{64nC_2}{c^2}\left(C + \frac{\kappa(1+3\epsilon)+8\epsilon}{8}\cdot C_{u,\lambda}^U\right)\frac{B\left(\delta(x_0)\right)}{\delta(x_0)}.\]
Let 
\[C_3 := \sup\left\{ \abs{\nabla u (x)} \, :\, \delta(x) \ge \frac{c\delta_0}{40}\right\}.\]
Since 
\[\delta(y_0) \ge \frac{c}{4}\left(\delta(x_0)+\frac{\delta_0}{10}\right) \ge \frac{c\delta_0}{40},\]
we have
\[\abs{N_0v(x_0)}\frac{\delta(x_0)}{B\left(\delta(x_0)\right)} \le C_3\frac{\delta(x_0)}{B\left(\delta(x_0)\right)}  + \frac{64nC_2}{c^2}\left(C + \frac{\kappa(1+3\epsilon)+8\epsilon}{8}\cdot C_{u,\lambda}^U\right).\]
Now, since $\delta(x_0) < (\delta_0/10) < \delta_0$, and $t/B(t)$ is non-decreasing,
\[\abs{N_0v(x_0)}\frac{\delta(x_0)}{B\left(\delta(x_0)\right)} \le C_3\frac{\delta_0}{B\left(\delta_0\right)}  + \frac{64nC_2}{c^2}\left(C + \frac{\kappa(1+3\epsilon)+8\epsilon}{8}\cdot C_{u,\lambda}^U\right).\]
Combining \eqref{eq:MEst} with the above estimate applied to $(n-1)$ orthonormal directions in $\vec{M_0}^\perp$, we get
\begin{multline*}
\abs{\nabla v(x_0)}\frac{\delta(x_0)}{B\left(\delta(x_0)\right)} \le C_4+ \left\{ \frac{\kappa(1+3\epsilon)}{8} + \frac{64n(n-1)C_2}{c^2}\left(\frac{\kappa(1+3\epsilon)+8\epsilon}{8}\right)\right\} C_{u,\lambda}^U\\
= C_4 + \left\{\left(1+\frac{64n(n-1)C_2}{c^2}\right)\frac{\kappa(1+3\epsilon)}{8} + \frac{64n(n-1)C_2}{c^2}\cdot \epsilon \right\}C_{u,\lambda}^U.
\end{multline*}
for some $C_4>0$. Choose 
\[\kappa = \left(10+\frac{640n(n-1)C_2}{c^2}\right)^{-1}.\]
So, for $x_0\in U''\cap\mathbb{B}$, the choices of $\epsilon$ and $\kappa$ give us
\[\abs{\nabla v(x_0)}\frac{\delta(x_0)}{B\left(\delta(x_0)\right)} \le C_4 + \frac{3}{100} C_{u,\lambda}^U.\]
Taking supremum over $U''\cap\mathbb{B}$,
\[C_{u,\lambda}^{U''} \, \le\,  C_4 + \frac{3}{100} C_{u,\lambda}^U.\]
By Lemma \ref{lemma:EstOutsideCpt},
\[C_{u,\lambda}^{U} \,\le\, \frac{\delta_0}{(\delta_0/10)}\cdot C_{u,\lambda}^{U''} \,\le\, 10C_4 + \frac{3}{10}C_{u,\lambda}^U,\]
and hence
\[C_{u,\lambda}^{U} \,\lesssim\, C_4.\]
Since the constants involved in the inequalities are independent of $\lambda$, let $\lambda \to 1$, to get $C_u^U < \infty$. Now by the Hardy-Littlewood Theorem, $u\in\Lambda_B(\mathbb{B})$.
\end{proof}

As alluded to earlier, notice that all the analysis so far was local, centred around a point near $b\mathbb{B}$. None of this depends on the behaviour of $u$ elsewhere in $\mathbb{B}$ or on the fact that the radius of this ball was $1$. So, we have the following corollary to Theorem \ref{thm:TransLipBall}.

\begin{cor}\label{CorMainThm} Let $\mathbb{B}_R = \{\abs{x} < R\}$ for some $R>0$. Let $\Gamma$ be a family of transversal curves to $b\mathbb{B}_R$. Let $S$ be an 
open $\mathbb{R}^n$-sector in $\mathbb{B}_R$ and $u$ be harmonic in $\mathbb{B}_R$. If $u$ is Lipschitz-$B$ along $\Gamma$ near $bB_R$ in $S$, then there exists a $\mathbb{R}^n$-sub-sector 
$\widetilde{S}$ of $S$ in $\mathbb{B}_R$ such that $u\in\Lambda_{B}(\widetilde{S}\cap U)$, where $U$ is a neighbourhood of b$\mathbb{B}_R$.
\end{cor}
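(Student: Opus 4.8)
The plan is to repeat the proof of Theorem~\ref{thm:TransLipBall} almost verbatim, now centred at a boundary point $p\in b\mathbb{B}_R$ lying in the relative interior of $\overline{S}\cap b\mathbb{B}_R$, and to exploit the fact that every estimate in that proof only uses $u$ on a neighbourhood of the point $x_0$ being estimated whose size is a fixed multiple of the $\delta_0$ appearing there. First I would fix a sub-sector $\widetilde S\subset\subset S$ (same vertex, strictly smaller aperture) together with an intermediate sector $\widetilde S\subset\subset S_1\subset\subset S$, so that $\operatorname{dist}(\widetilde S,bS)$ exceeds a fixed multiple of $\delta_0$; then one chooses $\delta_0$ (depending only on $\Gamma$, $R$, $B$, $n$, exactly as in Theorem~\ref{thm:TransLipBall}) small enough that every auxiliary ball, curve segment and fundamental-theorem-of-calculus path produced by the argument, when $x_0$ ranges over $\widetilde S$ near $b\mathbb{B}_R$, stays inside $S_1\cap\mathbb{B}_R$ --- the region where $u$ is harmonic and Lipschitz-$B$ along $\Gamma$. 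The scaling $u_\lambda(x)=u(\lambda x)$, $\tfrac12<\lambda<1$, still works because $\mathbb{B}_R$ is a ball, so $u_\lambda\in C^\infty(\overline{\mathbb{B}_R})$ is harmonic in $\mathbb{B}_R$; the perturbed family $\Gamma_\lambda$, the unit field $M$, and its constant-coefficient approximation $M_0$ are defined as before, and Lemmas~\ref{lemma:HarEst}, \ref{lemma:TransDistEst} and \ref{lemma:EstOutsideCpt} are local statements that go through unchanged, the only effect of $R\neq1$ being harmless changes of constants.

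With this set-up, the inequalities \eqref{eq:Dagger}--\eqref{eq:MEst} and the subsequent estimate on $N_0u_\lambda$ are produced exactly as in Theorem~\ref{thm:TransLipBall} for $x_0\in\widetilde S\cap U''$, giving
\[\abs{\nabla u_\lambda(x_0)}\,\frac{\delta(x_0)}{B(\delta(x_0))}\ \le\ C_4+\frac{3}{100}\,C^{\,S_1\cap U}_{u,\lambda},\qquad x_0\in\widetilde S\cap U'',\]
where $\delta(x)$ is now the distance to $b\mathbb{B}_R$ and $C^{\,S_1\cap U}_{u,\lambda}=\sup_{S_1\cap U}\abs{\nabla u_\lambda}\,\delta/B(\delta)$. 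Running the same argument with $(\widetilde S,S_1)$ replaced by each consecutive pair in a fixed finite chain $\widetilde S\subset\subset S_1\subset\subset\cdots\subset\subset S$ of sub-sectors --- each step a fattening by a fixed multiple of $\delta_0$, so the chain has fixed length --- and using the maximum principle for $\abs{\nabla u_\lambda}^2$ to control the thick part $\{\delta\ge\delta_0/10\}$ (which lies in a fixed compact subset of $\mathbb{B}_R$, where $u_\lambda\to u$ with all derivatives) by a $\lambda$-independent constant, one absorbs the $C_{u,\lambda}$-terms and concludes that $C^{\,\widetilde S\cap U}_{u,\lambda}$ is bounded independently of $\lambda$. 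Letting $\lambda\to1$ then yields $\abs{\nabla u(x)}\lesssim B(\delta(x))/\delta(x)$ on $\widetilde S\cap U$.

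Finally, to pass from this gradient bound to $u\in\Lambda_B(\widetilde S\cap U)$, I would run the box argument of Theorem~\ref{thm:H-L} with $\delta(x)$ replaced by $\operatorname{dist}(x,\partial\widetilde S)$: near the portion of $b(\widetilde S\cap U)$ lying on $b\mathbb{B}_R$ the boundary is smooth, so that argument (pushing points radially inward by $\abs{T-S}$) applies; at points $x\in\widetilde S$ that are closer to a lateral face of $\widetilde S$ than to $b\mathbb{B}_R$ one still has $\abs{\nabla u(x)}\lesssim B(\delta(x))/\delta(x)\le B(\operatorname{dist}(x,\partial\widetilde S))/\operatorname{dist}(x,\partial\widetilde S)$ because $B(t)/t$ is non-increasing; and near the remaining part of $b(\widetilde S\cap U)$, a compact subset of $\mathbb{B}_R$, $u$ is smooth, hence Lipschitz-$B$ since $t\lesssim B(t)$ for small $t$. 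The step I expect to be most delicate is the absorption in paragraph two: one must keep every auxiliary ball and every application of the maximum principle inside $S$, where the transverse Lipschitz hypothesis is available, while only ``reaching inward'' into fixed compact subsets of $\mathbb{B}_R$ --- this is precisely what forces $\widetilde S$ to sit strictly inside $S$ with a fixed collar of room, and what must be tracked carefully so that the constants produced do not depend on $\lambda$.
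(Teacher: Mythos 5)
Your proposal is in the same spirit as the paper's (very terse) argument: both observe that the estimates in the proof of Theorem~\ref{thm:TransLipBall} only ``see'' $u$ in a small neighbourhood of the point $x_0$ being estimated, so the argument should localize to a sub-sector $\widetilde S$ whose collar inside $S$ is a fixed multiple of $\delta_0$. That part is sound, as is your observation that $u_\lambda\to u$ locally uniformly with all derivatives on the thick slab $\{\delta\ge\delta_0/10\}$, giving $\lambda$-independent control there.

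The specific mechanism you propose for closing the estimate, however, does not work as stated. You run the key inequality across a \emph{fixed finite} chain $\widetilde S\subset\subset S_1\subset\subset\cdots\subset\subset S_N$, each step a fattening by $\gtrsim\delta_0$, obtaining at stage $j$ an estimate of the form
\[
\sup_{S_j\cap U''}\abs{\nabla u_\lambda}\frac{\delta}{B(\delta)}\ \le\ C_4+\frac{3}{100}\sup_{S_{j+1}\cap U}\abs{\nabla u_\lambda}\frac{\delta}{B(\delta)},
\]
and you bound the thick part of $\sup_{S_{j+1}\cap U}$ by a $\lambda$-independent constant. But the thin part $\sup_{S_{j+1}\cap U''}$ is taken over a \emph{strictly larger} sector than the one appearing on the left, so the recursion does not close into a self-improving inequality; unrolling it over $N$ steps leaves a residual term of order $\left(\tfrac{3}{100}\right)^{N}\sup_{S\cap U''}\abs{\nabla u_\lambda}\delta/B(\delta)$. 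Since $N$ is fixed by the geometry (it is at most $\operatorname{dist}(\widetilde S,\partial S)/\delta_0$ with $\delta_0$ already constrained by $\epsilon$ and the curvature constants), while $\sup_{S\cap U''}\abs{\nabla u_\lambda}\delta/B(\delta)$ has no bound uniform in $\lambda$ (it can blow up as $\lambda\to1$), this residual is not absorbed and the final estimate is \emph{not} $\lambda$-independent. This is a genuine gap, not a routine constant-chasing issue: unlike the ball case, where Lemma~\ref{lemma:EstOutsideCpt} closes the loop by bounding $C^{U}_{u,\lambda}$ back by $C^{U''}_{u,\lambda}$ via the global maximum principle, one cannot bound a supremum over a fattened sector by the supremum over the thinner one, so the absorption step needs a different idea. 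The paper does not use a chain; it instead imposes a pointwise containment condition on $\widetilde S$, namely $\widetilde S\cap U\subset\{x\in S\cap U : B\bigl(x,\delta_{b\mathbb{B}_R}(x)\bigr)\subset S\}$, intended to make the set over which the supremum is taken self-consistent under the local estimates, rather than iterating through nested sectors.

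Your final paragraph (passing from the gradient bound on $\widetilde S\cap U$ to $u\in\Lambda_B(\widetilde S\cap U)$) is fine in outline, and the remark that the scaling $u_\lambda$ works because $\mathbb{B}_R$ is a ball is correct; the only substantive issue is the absorption.
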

\begin{proof} Let $U$ be a neighbourhood of $b\mathbb{B}_R$ such that $\Gamma$ defines a $C^1$ bijection onto $U$. Restrict $U$, if necessary, so that it satisfies the requirements of the neighbourhood in 
the proof of the above theorem and also so that there exists a a $\mathbb{R}^n$-sub-sector of $S$, call it $\widetilde{S}$, such that
\[\widetilde{S} \cap U \subset \left\{x\in S \cap U\, :\, B(x,\delta_{b\mathbb{B}_R}(x)) \subset S\right\}\qedhere\]
\end{proof}

We use this to prove the Main Theorem.

\begin{mainthm} Let $\Omega \subset\subset \mathbb{R}^n$ have smooth boundary and let $\Gamma$ be a family of curves transversal to $b\Omega$. Let $u\in\Har(\Omega)$ and 
$B$ be a regular majorant. If $u$ is transversally Lipschitz-$B$ with respect to $\Gamma$, then $u\in\Lambda_B(\Omega)$.
\end{mainthm}
\begin{proof}
Let $c>0$ be the transversality constant of $\Gamma$. There exists a neighbourhood $U$ of $b\Omega$ such that the restriction of curves of $\Gamma$ to $U$ defines a $C^1$ bijection onto $U$ and Lemma 
\ref{lemma:TransDistEst} holds in $U\cap\Omega$. $\exists\, s_0>0$ such that, for $p\in b\Omega$, $p-s_0\nu_p \in U\cap\Omega$  and $\displaystyle b\mathbb{B}_p \cap b\Omega = \{p\}$, where 
$\mathbb{B}_p=\mathbb{B}(p-s_0\nu_p,s_0)$.
\begin{figure}[h]
\begin{center}
\includegraphics[width=0.43\textwidth]{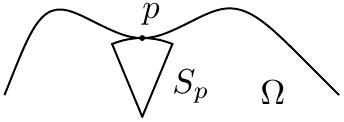}
\caption{Sector $S_p$}
\end{center}
\end{figure}
Fix a $p\in b\Omega$. Let $S_p$ be a $\mathbb{R}^n$-sector of $\mathbb{B}_p$ such that $\Gamma$ is a family of curves transversal to the $\mathbb{R}^n$-spherical boundary of $S_p$ with a transversality constant of at least 
$c/2$. Since $u$ is harmonic in $S_p$ and Lipschitz-$B$ along $\Gamma$, by Corollary \ref{CorMainThm} we have a sub-sector $\widetilde{S}_p$ of $S_p$ and a neighbourhood $V$ of $b\Omega$ such that 
$u\in \Lambda_{B}(\widetilde{S}_p\cap V)$. It is evident that $x\in V\cap\Omega$ belongs to $\widetilde{S}_p$ for some $p\in b\Omega$. This shows that $u\in\Lambda_{B}(\Omega)$.
\end{proof}

\section*{Acknowledgements}

This work is part of my Ph.D. dissertation \cite{Rav11} at The Ohio State University. I am deeply indebted to Jeffery McNeal, my thesis advisor, for his inspiration, motivation, and guidance over the years. I 
would like to thank Kenneth Koenig for his insightful feedback on this work. The exposition here, especially the organization of the proof of Theorem \ref{thm:TransLipBall}, has significantly benefited from his 
input.

\bibliographystyle{plain}

\begin{thebibliography}{1}

\bibitem{Det81}
Jacqueline D{\'e}traz.
\newblock Classes de {B}ergman de fonctions harmoniques.
\newblock {\em Bull. Soc. Math. France}, 109(2):259--268, 1981.

\bibitem{Dya97}
Konstantin~M. Dyakonov.
\newblock Equivalent norms on {L}ipschitz-type spaces of holomorphic functions.
\newblock {\em Acta Math.}, 178(2):143--167, 1997.

\bibitem{Gil-Tru83}
David Gilbarg and Neil~S. Trudinger.
\newblock {\em Elliptic partial differential equations of second order}, volume
  224 of {\em Grundlehren der Mathematischen Wissenschaften [Fundamental
  Principles of Mathematical Sciences]}.
\newblock Springer-Verlag, Berlin, second edition, 1983.

\bibitem{Hav71}
V.~P. Havin.
\newblock A generalization of the {P}rivalov-{Z}ygmund theorem on the modulus
  of continuity of the conjugate function.
\newblock {\em Izv. Akad. Nauk Armjan. SSR Ser. Mat.}, 6(2-3):252--258; ibid. 6
  (1971), no. 4, 265--287, 1971.

\bibitem{Her-McN10}
A.-K. Herbig and J.~McNeal.
\newblock Convex defining functions for convex domains.
\newblock J. Geom. Anal., to appear, arXiv:0912.4653v2.

\bibitem{Pav99}
Miroslav Pavlovi{\'c}.
\newblock On {K}. {M}. {D}yakonov's paper: ``{E}quivalent norms on
  {L}ipschitz-type spaces of holomorphic functions'' [{A}cta {M}ath.\ {\bf 178}
  (1997), no.\ 2, 143--167; {MR}1459259 (98g:46029)].
\newblock {\em Acta Math.}, 183(1):141--143, 1999.

\bibitem{Pav07-LipHarmonic}
Miroslav Pavlovi{\'c}.
\newblock Lipschitz conditions on the modulus of a harmonic function.
\newblock {\em Rev. Mat. Iberoam.}, 23(3):831--845, 2007.

\bibitem{Rav11}
Sivaguru Ravisankar.
\newblock {\em Lipschitz properties of harmonic and holomorphic functions}.
\newblock Ph.{D}. diss., The Ohio State University, 2011.

\bibitem{Zyg59}
A.~Zygmund.
\newblock {\em Trigonometric series. 2nd ed. {V}ols. {I}, {II}}.
\newblock Cambridge University Press, New York, 1959.

\end{thebibliography}

\listoffigures

\end{document}